\newtheorem{theorem}{Theorem}[section]
\newtheorem{lemma}[theorem]{Lemma}
\newtheorem{corollary}[theorem]{Corollary}
\newtheorem{proposition}[theorem]{Proposition}
\theoremstyle{definition}
\newtheorem{example}[equation]{Example}
\theoremstyle{remark}
\newtheorem{remark}[equation]{Remark}
\newcommand{\R}{\mathbb{R}}
\newcommand{\Rn}{\R^n}
\newcommand{\Om}{\Omega}
\newcommand{\la}{\langle}
\newcommand{\ra}{\rangle}
\newcommand{\CcU}{C^{\infty}_0(U)}
\newcommand{\CcO}{C^{\infty}_0(\Omega)}
\newcommand{\loc}{\textnormal{loc}}
\newcommand{\vp}{\varphi}
\newcommand{\ue}{u^{\epsilon}}
\newcommand{\Ve}{V^{\epsilon}}
\newcommand{\ued}{u^{\epsilon,\delta}}
\newcommand{\nuk}{\nu_\kappa}
\newcommand{\muk}{\mu_\kappa}
\newcommand{\sqrtue}{\sqrt{|\nabla u^\varepsilon|^2+\varepsilon}}
\DeclareMathOperator{\tr}{tr\,}
\DeclareMathOperator{\diverg}{div\,}
\DeclareMathOperator{\dist}{dist\,}
\DeclareMathOperator{\dv}{div\,}
\numberwithin{equation}{section}
\begin{document}
\title[Local second order regularity of solutions to elliptic Orlicz-Laplace equation]{Local second order regularity of solutions to elliptic Orlicz-Laplace equation}

\author{Arttu Karppinen}
\address[Arttu Karppinen]{
Faculty of Mathematics, Informatics and Mechanics, University of Warsaw, ul. Banacha 2, 02-097 Warsaw, Poland}
\email{a.karppinen@uw.edu.pl}

\author{Saara Sarsa}
\address[Saara Sarsa]{
Department of Mathematics and Statistics, University of
Jyv\"askyl\"a, PO~Box~35, FI-40014 Jyv\"askyl\"a, Finland}
\email{saara.m.sarsa@jyu.fi}

\subjclass[2010]{35J62, 35J70}
\keywords{Orlicz--Laplace equation, Sobolev regularity}

\thanks{A. Karppinen was supported by NCN Grant Sonata Bis 2019/34/E/ST1/00120. 
S. Sarsa was supported by the Academy of Finland, project 308759, by the Academy of Finland, Center of Excellence in Randomness and Structures and by the Jenny and Antti Wihuri foundation.}

\begin{abstract}
We consider Orlicz--Laplace equation $-\diverg(\frac{\vp'(|\nabla u|)}{|\nabla u|}\nabla u)=f$ where $\vp$ is an Orlicz function and either $f=0$ or $f\in L^\infty$. We prove local second order regularity results for the weak solutions $u$ of the Orlicz--Laplace equation.
More precisely, we show that if $\psi$ is another Orlicz function that is close to $\vp$ in a suitable sense, then $\frac{\psi'(|\nabla u|)}{|\nabla u|}\nabla u\in W^{1,2}_{\loc}$. This work contributes to the building up of quantitative second order Sobolev regularity for solutions of nonlinear equations.
\end{abstract}
\maketitle
\section{Introduction}
In this article we consider Orlicz--Laplace type equations
\begin{equation} \label{eq:OL-equation}
    -\diverg\Big(\frac{\vp'(|\nabla u|)}{|\nabla u|}\nabla u\Big)=f \quad\text{in }\Omega,
\end{equation}
where $\Omega \subset \Rn$, $n\geq 2$, is a bounded domain, $\vp$ is a $C^2$-regular Orlicz function that satisfies $(p,q)$ growth condition for $1<p\leq q<\infty$, and either $f=0$ or $f\in L^\infty(\Omega)$. Our main goal is to prove local second order regularity results for the weak solutions $u$ of \eqref{eq:OL-equation}. We refer the reader to Section~\ref{ssec:orlicz-spaces} for more details on definitions and assumptions.

Let us briefly describe the context surrounding the topic of this article. There are multiple types of second order regularity results in the literature. The most basic ones consider $L^2$-regularity of the Hessian matrix $D^2 u$ of the weak solutions of equations with quadratic growth. In the case of non-quadratic growth, there has been vast literature on regularity of nonlinear vector fields mirroring the equation in question.

To begin with, consider the case of $p$-Laplacian, i.e. $\vp(t)=\frac{1}{p}t^p$ for $1<p<\infty$. For the homogeneous problem $\Delta_p u=0$, the most natural second order regularity result states that $|\nabla u|^{\frac{p-2}{2}} \nabla u\in W^{1,2}_{\loc}(\Om;\Rn)$, see for instance \cite{Bojarski1987}*{Proposition 2} and \cite{Chen1989}. Such results also appear in \cite{Uhlenbeck1977,Lewis1983}, where they act as intermediate steps in the proof of the Hölder-continuity of the gradient $\nabla u$. Manfredi and Weitsman \cite{Manfredi1988} proved that $D^2u\in L^2_\loc(\Om)$ on the range $1<p<3+\frac{2}{n-2}$. These results were later developed by Dong, Peng, Zhang and Zhou \cite{Dong2020} and the second author \cite{Sarsa2022} to cover the $W^{1,2}_{\loc}$-regularity of a family of vector fields $|\nabla u|^\beta\nabla u$, where $\beta>-1+\frac{(n-2)(p-1)}{2(n-1)}$.

For the non-homogeneous problem $\Delta_pu=f$, a natural question to ask is the following: Given the regularity of $f$, can we trade the divergence operator $\diverg$ with the full derivative $D$? In other words, the most natural second order results concern the regularity of the vector field $|\nabla u|^{p-2}\nabla u$, also known as stress field. For these type of results, we refer to \cites{Avelin2018,Lou2008,Damascelli2004,Mingione2010}. For some other results for the non-homogeneous problem, see \cite{Simon1978,DeThelin1982,BeiraodaVeiga2013,Crispo2016,Cellina2017-23,Cellina2017-34,Montoro2023}. 

Many results for the vector fields $|\nabla u|^{\frac{p-2}{2}}\nabla u$ and $|\nabla u|^{p-2}\nabla u$ stemming from the prototypical $p$-Laplacian can be generalized to the case of non-standard growth, see for instance \cites{Diening2009,Challal2010,Esposito2002,Esposito2004,Carozza2011} and \cites{Cianchi2018,Cianchi2019,Balci2021,Miao2023,Antonini2023}, respectively. For similar results with $p(x)$-Laplace equation we refer to \cites{Challal2011,Giannetti2015,Wang2023}.

The purpose of this article is to investigate the $W^{1,2}_\loc$-regularity of the vector field
\begin{equation} \label{eq:Intro-V-psi}
    V_\psi(\nabla u):=\frac{\psi'(|\nabla u|)}{|\nabla u|}\nabla u,
\end{equation}
where $u$ denotes a weak solution of \eqref{eq:OL-equation} and $\psi$ denotes a $C^2$-regular Orlicz-function with $(\tilde{p},\tilde{q})$ growth for $1<\tilde{p}\leq\tilde{q}<\infty$. We show that if the growth rates of $\vp$ and $\psi$ are close enough in a suitable quantitative sense, then it is possible to derive a Caccioppoli--type inequality for $V_\psi(\nabla u)$. The proof relies on Cordes matrix inequalities \cite{Cordes1961} (see also \cite{Talenti1965}) and a pointwise inequality from \cite{Haarala2022}.

Introduction of the new Orlicz function $\psi$ allows us to cover multiple second order regularity results at once. Some particularly interesting special cases of $\psi$ are given by $\psi(t)=\vp(t)$, $\psi'(t)=\sqrt{\vp'(t)t}$, and $\psi(t)=\frac{1}{2}t^2$. In these cases $V_\psi(\nabla u)$ is given by
\begin{equation} \label{eq:interesting-vector-fields}
    \frac{\vp'(\nabla u)}{|\nabla u|}\nabla u,\quad
    \sqrt{\frac{\vp'(\nabla u)}{|\nabla u|}}\nabla u\quad\text{and}\quad \nabla u, 
\end{equation}
respectively. If $\vp(t)=\frac{1}{p}t^p$, then the first vector field in \eqref{eq:interesting-vector-fields} is $|\nabla u|^{p-2}\nabla u$ and the second is $|\nabla u|^{\frac{p-2}{2}}\nabla u$. In Section \ref{ssec:Discussion} we discuss the meaning of our results for the special cases of vector fields from \eqref{eq:interesting-vector-fields}.

For the purposes of this article, we introduce a \emph{closeness function} $\theta\colon(0,\infty)\to(0,\infty)$ of two Orlicz functions $\vp$ and $\psi$. It is defined by
\begin{equation} \label{eq:Intro-closeness-def}
    \theta(t):=
    \Big(\frac{\vp''(t)t}{\vp'(t)}\Big)\Big/\Big(\frac{\psi''(t)t}{\psi'(t)}\Big).
\end{equation}
Roughly speaking, $\theta$ is the ratio of the pointwise growth rates of $\vp'$ and $\psi'$. For a detailed discussion on the closeness function, see Section \ref{ssec:closenss}.

We consider the homogeneous case $f=0$ and the non-homogeneous case $f\in L^\infty(\Om)$ separately. 

For the homogeneous problem, our main result, Theorem \ref{thm:main1} below, is a generalization of \cite{Dong2020}*{Theorem 1.1} and \cite{Sarsa2022}*{Theorem 1.1}. These known results concern the $p$-Laplace equation $\Delta_pu=0$, and they yield $W^{1,2}_{\loc}$-regularity of $|\nabla u|^\beta\nabla u$ for $\beta>-1+\frac{(n-2)(p-1)}{2(n-1)}$. Our theorem says that this result can be straightforwardly generalized to the case of Orlicz--Laplace equation, and the appropriate condition is given in terms of the closeness function $\theta$.

In the statement of the theorem, and throughout the paper, a generic ball in $\Rn$ with radius $r>0$ is denoted briefly as $B_r$. The integral average of a locally integrable function $v$ over $B_r$ is denoted by
$$ (v)_{B_r}:=\fint_{B_r}vdx=\frac{1}{|B_r|}\int_{B_r}vdx. $$

\begin{theorem}[Main result for homogeneous problem] \label{thm:main1}
Let $\vp$ be a $C^2$-regular Orlicz function with $(p,q)$ growth and let $u\in W^{1,\vp}(\Om)$ be a weak solution of \eqref{eq:OL-equation} with $f=0$. 
Suppose that $\psi$ is another $C^2$-regular Orlicz function with $(\tilde{p},\tilde{q})$ growth and $V_\psi(\nabla u):=\frac{\psi'(|\nabla u|)}{|\nabla u|}\nabla u$.
If the closeness function $\theta$ of $\vp$ and $\psi$, as defined in \eqref{eq:Intro-closeness-def}, is uniformly bounded from above as follows,
\begin{equation} \label{eq:Intro-closeness}
    \sup\{\theta(t):0<t<\infty\}<\frac{2(n-1)}{n-2},
\end{equation}
then $V_\psi(\nabla u)\in W^{1,2}_{\loc}(\Om;\Rn)$. Moreover, we have the local estimate
\begin{equation} \label{eq:main1}
    \int_{B_r}|D(V_\psi(\nabla u))|^2dx
\leq \frac{C}{r^2}\int_{B_{2r}}|V_\psi(\nabla u)-(V_\psi(\nabla u))_{B_{2r}}|^2dx,
\end{equation}
for all concentric balls $B_r\subset B_{2r}\Subset\Om$.
Here $C>0$ is a positive constant that depends on $n$, $p$, $q$, $\tilde{p}$, $\tilde{q}$, and the supremum in \eqref{eq:Intro-closeness}.
\end{theorem}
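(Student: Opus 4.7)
The plan follows the standard regularization strategy. First I would approximate $\vp$ by a non-degenerate variant, for instance $\vp_\ve'(t):=\vp'(t)+\ve t$, and let $\ue$ be the weak solution of the corresponding uniformly elliptic Orlicz--Laplace problem with the same boundary data as $u$ on a ball $B_{2r}\Subset\Om$. Classical theory provides $\ue\in W^{2,2}_\loc\cap C^{1,\alpha}_\loc$, $\ve$-independent bounds on the relevant Orlicz quantities, and strong convergence $\nabla\ue\to\nabla u$ as $\ve\to 0$, so that the final Caccioppoli-type bound transfers from $\ue$ back to $u$.

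Differentiating the equation $\dv V_{\vp_\ve}(\nabla\ue)=0$ in each direction $x_k$ gives the weak identity
\[
\int_{\Om}A_{ij}(\nabla\ue)\,\ue_{ik}\,\partial_j\zeta\,dx=0,
\]
where $A(z):=DV_{\vp_\ve}(z)$ is the symmetric matrix with tangential eigenvalue $\vp_\ve'(|z|)/|z|$ (multiplicity $n-1$) and radial eigenvalue $\vp_\ve''(|z|)$. I would test this with $\zeta=\eta^2\bigl(V_\psi(\nabla\ue)-c\bigr)_k$, for the standard cutoff $\eta\in C^\infty_0(B_{2r})$ satisfying $\eta\equiv 1$ on $B_r$ and $|\nabla\eta|\le 2/r$, and with constant vector $c:=(V_\psi(\nabla\ue))_{B_{2r}}$. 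Summing in $k$ and absorbing the terms in which derivatives fall on $\eta$ via Cauchy--Schwarz/Young inequalities produces, writing $B(z):=DV_\psi(z)$, an energy identity of the form
\[
\int_{\Om}\eta^2\,A_{ij}(\nabla\ue)\,B_{kl}(\nabla\ue)\,\ue_{ik}\,\ue_{jl}\,dx\le \frac{C}{r^2}\int_{B_{2r}}\bigl|V_\psi(\nabla\ue)-c\bigr|^2\,dx.
\]

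I expect the main obstacle to lie in the pointwise matrix inequality needed to identify the left-hand side of the energy bound with the target $\int\eta^2|D(V_\psi(\nabla\ue))|^2\,dx$. Explicitly, one must show that for every symmetric $n\times n$ matrix $M$ and every $z\neq 0$,
\[
|B(z)M|^2\le C\bigl(n,\theta(|z|)\bigr)\,A_{ij}(z)B_{kl}(z)M_{ik}M_{jl}.
\]
Since $A$ and $B$ are simultaneously diagonal in the eigenbasis $(z/|z|,\,z^\perp)$, both sides depend on $z$ only through the two ratios $\vp_\ve''(|z|)|z|/\vp_\ve'(|z|)$ and $\psi''(|z|)|z|/\psi'(|z|)$, whose quotient is precisely the closeness function $\theta$. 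Here one invokes the Cordes matrix inequality \cite{Cordes1961} together with the pointwise inequality of \cite{Haarala2022}: combined, they produce such a bound and show that the constant $C(n,\theta)$ remains finite precisely under the sharp condition $\sup\theta<2(n-1)/(n-2)$, which is where the dimensional hypothesis \eqref{eq:Intro-closeness} enters. The equation alone constrains only the trace-type contraction of $D^2\ue$ against $A$, and the Cordes-type estimate is what promotes this partial information into full control of $D(V_\psi(\nabla\ue))$.

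Combining the pointwise bound with the energy identity and using $\eta\equiv 1$ on $B_r$ yields the estimate \eqref{eq:main1} for $\ue$. Passing $\ve\to 0$ via weak lower semicontinuity of the left-hand side and strong $L^2_\loc$ convergence of $V_\psi(\nabla\ue)\to V_\psi(\nabla u)$ on the right-hand side then finishes the proof.
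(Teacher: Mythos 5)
Your high-level strategy (regularize the degenerate equation, prove a Caccioppoli estimate for the regularized solution, pass to the limit) matches the paper, and the regularization by $\vp'_\ve(t)=\vp'(t)+\ve t$ is a legitimate alternative to the paper's shift $|\nabla u|\mapsto\sqrt{|\nabla u|^2+\ve}$. However, the core analytic step you propose does not hold, and it is not what the cited Cordes/Haarala--Sarsa inequality provides.

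The inequality you single out as the ``main obstacle,''
$$
|B(z)M|^2\le C\bigl(n,\theta(|z|)\bigr)\,A_{ij}(z)B_{kl}(z)M_{ik}M_{jl},
$$
is false for scaling reasons. The left side is homogeneous of degree $2$ in $B$ and degree $0$ in $A$; the right side is degree $1$ in each. Replacing $\psi$ by $\lambda\psi$ scales $B\mapsto\lambda B$ but leaves $\theta$ unchanged, so no constant $C(n,\theta)$ can work. Concretely with $n=2$, $\vp(t)=\tfrac12 t^2$, $\psi(t)=\tfrac14 t^4$ (so $\theta\equiv1/3$) and $M=\mathrm{diag}(1,0)$, in the eigenbasis of $A,B$ one finds $|BM|^2=9|z|^4$ while $A_{ij}B_{kl}M_{ik}M_{jl}=3|z|^2$, and the ratio is unbounded. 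What can be controlled by the growth rates and $\theta$ is a comparison of the form $|BM|^2\lesssim\tfrac{\psi'(|z|)/|z|}{\vp'(|z|)/|z|}\,A_{ij}B_{kl}M_{ik}M_{jl}$; but the weight $\psi'/\vp'$ then reappears in the energy estimate and ruins the statement you are after (and is, in fact, exactly the ratio function $\rho$ that the paper must assume bounded in the non-homogeneous case).

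A second, related gap: in your energy identity the cutoff error term is $2\eta\,\partial_j\eta\,(DV_{\vp}(\nabla\ue))_{jk}(V_\psi(\nabla\ue)-c)_k$, which involves $DV_\vp$, not $DV_\psi$. Absorbing $\eta^2|DV_\vp|^2$ into $\eta^2 A_{ij}B_{kl}\ue_{ik}\ue_{jl}$ again requires bounds on $\vp'/\psi'$ and $\vp''/\psi''$ that the closeness condition does not give. The paper circumvents both problems by using a genuinely different structure: Lemma~\ref{lem:Inequality} gives a \emph{pointwise divergence-form} inequality
$$
c\,|DV_b|^2\le\diverg\bigl((DV_b-\tr(DV_b)I)(V_b-Z)\bigr)+C\Bigl(\frac{b}{a}\Bigr)^2\bigl(\diverg V_a\bigr)^2,
$$
in which every term is degree $2$ in $b$ and degree $0$ in $a$, and the cutoff error after integration by parts is $|DV_b|\,|V_b-Z|\,|\nabla\eta|$ (involving $DV_\psi$, not $DV_\vp$), which absorbs cleanly. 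In the homogeneous case the last term vanishes, giving \eqref{eq:main1} without any assumption on $\psi'/\vp'$. Your differentiated-equation-plus-test-function route would need to be re-balanced (e.g.\ by weighting the test function with $\rho=\psi'/\vp'$), but then new terms from $\nabla\rho$ appear and it is not clear how to close the argument; in any case, it is not the Cordes/Haarala--Sarsa inequality, and as written the step fails.

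A smaller point: the pointwise inequality requires $u\in C^3$, while weak solutions are only $C^{1,\alpha}$ and the regularized $\ue$ only $W^{2,2}_{\loc}$. The paper needs a further mollification of $\vp$, $\psi$ and $\ue$ to apply the pointwise inequality, followed by a careful limit; your sketch does not address this, though it is a more routine issue than the scaling gap above.
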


Note that the right hand side of \eqref{eq:main1} is finite, because $\nabla u$ is locally bounded \cites{Lieberman1991,Lieberman1992}. 
Combining the Caccioppoli type inequality \eqref{eq:main1} with a Sobolev--Poincar\'e inequality and utilizing Gehring's lemma, we obtain a higher integrability result.

\begin{corollary} [Higher integrability for homogeneous problem] \label{thm_higher-integrability-1}
Under the assumptions of Theorem \ref{thm:main1}, there exists a positive constant $\delta>0$ such that $D(V_\psi(\nabla u))\in L^{2+\delta}_{\loc}(\Omega;\R^{n\times n})$. Additionally, 
\begin{equation*}
    \left(\fint_{B_r} |D(V_\psi(\nabla u))|^{2+\delta} \, dx\right)^{1/(2+\delta)} 
    \leq C \left(\fint_{B_{2r}} |D(V_\psi(\nabla u))|^2 \, dx\right)^{1/2}
\end{equation*}
for all concentric balls $B_r\subset B_{2r}\Subset\Om$, where $C>0$ is a positive constant.
The constants $\delta$ and $C$ depend on $n$, $p$, $q$, $\tilde{p}$, $\tilde{q}$, and the supremum in \eqref{eq:Intro-closeness}.
\end{corollary}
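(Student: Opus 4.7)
The plan is to convert the Caccioppoli-type estimate \eqref{eq:main1} into a reverse Hölder inequality for $|D(V_\psi(\nabla u))|$, after which Gehring's lemma yields the desired higher integrability. Throughout, write $V:=V_\psi(\nabla u)$; by Theorem \ref{thm:main1}, $V\in W^{1,2}_{\loc}(\Om;\Rn)$.

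First I would apply a Sobolev--Poincaré inequality on each ball $B_{2r}\Subset\Om$. For $n\geq 3$ the natural choice is the subquadratic exponent $s=2n/(n+2)$, whose Sobolev conjugate equals $2$, giving
\begin{equation*}
\Bigl(\fint_{B_{2r}}|V-(V)_{B_{2r}}|^2\,dx\Bigr)^{1/2}
\leq C_n\,r\Bigl(\fint_{B_{2r}}|DV|^s\,dx\Bigr)^{1/s};
\end{equation*}
for $n=2$ any $s\in[1,2)$ works equally well. Substituting this bound into \eqref{eq:main1} and dividing through by $|B_r|$, the $r^2$ factors cancel and the volume ratio $|B_{2r}|/|B_r|=2^n$ is absorbed into a purely dimensional constant, so that
\begin{equation*}
\fint_{B_r}|DV|^2\,dx\leq C\Bigl(\fint_{B_{2r}}|DV|^s\,dx\Bigr)^{2/s}
\end{equation*}
holds for every pair of concentric balls $B_r\subset B_{2r}\Subset\Om$. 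This is a reverse Hölder inequality of the standard form, with summability exponent $s<2$ on the right and $2$ on the left.

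At this point the classical Gehring lemma applies directly and produces $\delta>0$, depending only on $n$, $s$, and the constant $C$, such that $|DV|\in L^{2+\delta}_{\loc}(\Om)$ together with the quantitative estimate stated in the corollary. Since $C$ in turn depends on the parameters listed in Theorem \ref{thm:main1}, the dependencies of both $\delta$ and the final constant follow immediately. I do not expect any serious obstacle: modulo the (already proved) Caccioppoli inequality, this is the textbook Giaquinta--Modica self-improving property, and the only step requiring mild care is the bookkeeping of constants along the chain above.
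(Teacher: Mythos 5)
Your proof is correct and follows essentially the same route as the paper: substitute the Sobolev--Poincar\'e inequality with exponent $2n/(n+2)$ into the Caccioppoli estimate \eqref{eq:main1} to obtain a reverse H\"older inequality, then invoke Gehring's lemma with $h=0$.
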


For the non-homogeneous problem, let us recall that a natural approach is to consider regularity of the vector field $V_\vp(\nabla u)$, that is, to choose $\psi=\vp$. In this paper our aim is to investigate the possibility that $\psi\neq\vp$.

Our work is closely related to the work of Challal and Lyaghfouri \cite{Challal2010}. They considered the vector field $\sqrt{\frac{\vp'(|\nabla u|)}{|\nabla u|}}\nabla u$ and obtained the following result: If $u\in W^{1,\vp}(\Om)$ is a weak solution of \eqref{eq:OL-equation} and either $\nabla f$ belongs to the dual Orlicz-space or $t\mapsto \frac{\vp'(t)}{t}$ is non-increasing, then $D\Big(\sqrt{\frac{\vp'(|\nabla u|)}{|\nabla u|}}\nabla u\Big)\in L^2_{\loc}(\Om;\R^{n\times n})$. Our results improve and generalize \cite{Challal2010}*{Theorem 3.1} in the latter case. Firstly, instead of the special vector field $\sqrt{\frac{\vp'(|\nabla u|)}{|\nabla u|}}\nabla u$, we consider a more general vector field $\frac{\psi'(|\nabla u|)}{|\nabla u|}\nabla u$. Secondly, we obtain a Caccioppoli-type inequality \eqref{eq:main2-new}, which in turn allows us to get a slightly better integrability for $D(\frac{\psi'(|\nabla u|)}{|\nabla u|}\nabla u)$. For a more detailed discussion, see Example \ref{ex:CL-improvement}.

In this article we derive a Caccioppoli-type inequality, inequality \eqref{eq:main2-new} below, for solutions of the non-homogeneous problem. The central assumption is the closeness of the Orlicz functions $\vp$ and $\psi$, as in Theorem \ref{thm:main1}. Additionally, the source term on the right hand side of the Caccioppoli-type inequality \eqref{eq:main2-new} involves the ratio $\psi'(|\nabla u|)/\vp'(|\nabla u|)$. If $\psi=\vp$, then such ratio reduces to constant one, but in general we need to impose a further condition to the Orlicz function $\psi$ in order to guarantee the finiteness of such integral. We introduce the \emph{ratio function} of $\vp$ and $\psi$, given by
\begin{equation} \label{eq:Intro-ratio-def}
    \rho(t):=\frac{\psi'(t)}{\vp'(t)},
\end{equation}
and we assume that $\rho(t)$ stays uniformly bounded for $t\in[0,1]$.

\begin{theorem}[Main result for non-homogeneous problem] \label{thm:main2-new}
Let $\vp$ be a $C^2$-regular Orlicz function with $(p,q)$ growth and let $u\in W^{1,\vp}(\Om)$ be a weak solution of \eqref{eq:OL-equation} with $f\in L^\infty(\Om)$. Suppose that $\psi$ is another $C^2$-regular Orlicz function with $(\tilde{p},\tilde{q})$ growth and $V_\psi(\nabla u):=\frac{\psi'(|\nabla u|)}{|\nabla u|}\nabla u$. If the closeness function $\theta$ of $\vp$ and $\psi$, as defined in \eqref{eq:Intro-closeness-def}, is uniformly bounded from above as follows, 
\begin{equation} \label{eq:Intro-closeness-2}
    \sup\{\theta(t):0<t<\infty\}<\frac{2(n-1)}{n-2},
\end{equation}
and the ratio function $\rho$ of $\vp$ and $\psi$, as defined in \eqref{eq:Intro-ratio-def}, is uniformly bounded for small values as follows,
\begin{equation} \label{eq:Intro-ratio-bounded}
    \sup\{\rho(t):0\leq t\leq 1\}<\infty,
\end{equation}
then $V_\psi(\nabla u)\in W^{1,2}_{\loc}(\Om;\Rn)$. Moreover, we have the local estimate
\begin{align} \label{eq:main2-new}
    \int_{B_r}|D(V_\psi(\nabla u))|^2dx
    \leq \frac{C}{r^2}\int_{B_{2r}}|V_\psi(\nabla u)-(V_\psi(\nabla u))_{B_{2r}}|^2dx
    +C\int_{B_{2r}}\Big(\frac{\psi'(|\nabla u|)}{\vp'(|\nabla u|)}f\Big)^2dx,
\end{align}
for all concentric balls $B_r\subset B_{2r}\Subset\Om$. Here $C>0$ is a positive constant that depends on $n$, $p$, $q$, $\tilde{p}$, $\tilde{q}$, and the supremum in \eqref{eq:Intro-closeness-2}.
\end{theorem}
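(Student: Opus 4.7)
The plan is to follow the same regularization-and-Caccioppoli strategy that proves Theorem~\ref{thm:main1}, but carrying the source term $f$ through each step. First I would regularize: replace $\vp$ by a uniformly elliptic, smooth variant $\vp^\varepsilon$ (for instance $\vp^\varepsilon(t) = \vp(t) + \tfrac{\varepsilon}{2} t^2$), approximate $f$ by smooth $f^\varepsilon$ with $\|f^\varepsilon\|_\infty \leq \|f\|_\infty$, and solve the Dirichlet problem $-\diverg V_{\vp^\varepsilon}(\nabla u^\varepsilon) = f^\varepsilon$ on a slightly smaller subdomain with boundary data $u$. Standard nonlinear elliptic theory together with \cite{Lieberman1991} yields $u^\varepsilon \in C^{1,\alpha}_\loc \cap W^{2,2}_\loc$, uniform $W^{1,\infty}_\loc$-bounds, and convergence $u^\varepsilon \to u$ in $W^{1,\vp}_\loc$. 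The matching regularization $\psi^\varepsilon$ is chosen so that the closeness function $\theta^\varepsilon$ and the ratio function $\rho^\varepsilon$ obey \eqref{eq:Intro-closeness-2} and \eqref{eq:Intro-ratio-bounded} uniformly in $\varepsilon$.

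With the approximate solutions smooth, I would test the weak formulation with $\partial_k(\zeta^2(V_{\psi^\varepsilon}^k(\nabla u^\varepsilon) - c^k))$, summed over $k$, with $c$ a constant vector and $\zeta$ a standard cutoff on $B_{2r}$, and integrate by parts in $x_k$ to obtain, schematically,
\begin{equation*}
\int \zeta^2 B^{ij} M^{kl} \partial_i \partial_k u^\varepsilon \, \partial_j \partial_l u^\varepsilon \, dx \leq \frac{C}{r^2} \int |V_{\psi^\varepsilon}(\nabla u^\varepsilon) - c|^2 \, dx + \int f^\varepsilon \partial_k \bigl(\zeta^2 (V_{\psi^\varepsilon}^k - c^k)\bigr) \, dx,
\end{equation*}
where $B := D V_{\vp^\varepsilon}(\nabla u^\varepsilon)$ and $M := D V_{\psi^\varepsilon}(\nabla u^\varepsilon)$ share the same eigenvector decomposition (the plane spanned by $\nabla u^\varepsilon$ and its orthogonal complement). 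Exactly as in the proof of Theorem~\ref{thm:main1}, the Cordes-type matrix inequality combined with the pointwise inequality of \cite{Haarala2022} converts the left-hand side into a constant multiple of $\int \zeta^2 |D V_{\psi^\varepsilon}(\nabla u^\varepsilon)|^2 \, dx$, the multiplicative constant depending only on the gap between $\sup \theta^\varepsilon$ and $\tfrac{2(n-1)}{n-2}$.

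The novelty of the non-homogeneous case lies in the source term on the right. Its cutoff-gradient piece is absorbed into the $r^{-2}$-term by Young's inequality, leaving $\int \zeta^2 f^\varepsilon \diverg V_{\psi^\varepsilon}(\nabla u^\varepsilon) \, dx$. Using the equation $\diverg V_{\vp^\varepsilon}(\nabla u^\varepsilon) = -f^\varepsilon$ and the fact that $B$ and $M$ share eigenvectors, one expresses $\diverg V_{\psi^\varepsilon}(\nabla u^\varepsilon)$ as $-\rho^\varepsilon(|\nabla u^\varepsilon|) f^\varepsilon$ plus a residual controlled pointwise by $|\theta^\varepsilon - 1| \cdot |D V_{\psi^\varepsilon}(\nabla u^\varepsilon)|$; this residual arises from the mismatch between eigenvalues of $B$ and $M$ in the direction of $\nabla u^\varepsilon$ itself, and by the $(p,q)$/$(\tilde p,\tilde q)$ growth the factor $|\theta^\varepsilon - 1|$ is bounded. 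A further application of Young's inequality absorbs the residual into the left-hand side at the cost of slightly shrinking the Cordes constant, and produces $C \int \zeta^2 (\rho^\varepsilon f^\varepsilon)^2 \, dx$ as the net source contribution to \eqref{eq:main2-new}.

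Finally I would pass $\varepsilon \to 0$. Assumption \eqref{eq:Intro-ratio-bounded}, the $(p,q)$/$(\tilde p,\tilde q)$ growth rates, and the local $L^\infty$-bound on $\nabla u$ together guarantee $\rho(|\nabla u|) f \in L^2_\loc(\Om)$, so the right-hand side is finite in the limit; choosing $c = (V_\psi(\nabla u))_{B_{2r}}$ yields \eqref{eq:main2-new}. I expect the principal technical obstacle to be the bookkeeping in the source-term computation: one must cleanly identify the coefficient $\rho = \psi'/\vp'$ by separating the contribution of $f$ in the $\nabla u$-direction (where the ratio $\psi''/\vp''$ appears) from the orthogonal directions (where $\psi'/\vp'$ appears), and absorb the mismatch through the bound on $\theta$. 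A secondary subtlety is choosing $\vp^\varepsilon, \psi^\varepsilon$ so that the closeness and ratio functions remain under uniform control, which is what allows the limiting argument to go through.
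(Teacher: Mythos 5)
Your overall architecture — regularize, differentiate the equation against $\partial_k(\zeta^2(V_{\psi^\varepsilon}^k-c^k))$, invoke a Cordes-type matrix inequality, then pass to the limit — is close in spirit to the paper. The paper's own route is a bit more streamlined: instead of carrying out the second-order testing and the residual bookkeeping by hand, it invokes a prepackaged pointwise divergence-form inequality (its Lemma~\ref{lem:Inequality}, from \cite{Haarala2022}) in which the source appears directly as $C(\psi'/\vp')^2(\diverg V_\vp)^2$. Since $\diverg V_\vp=-f$ pointwise by Corollary~\ref{cor:Pointwise-PDE}, the coefficient $\rho=\psi'/\vp'$ drops out with no residual analysis at all. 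Your residual computation, identifying $\diverg V_\psi = -\rho f + O\big(|1-\theta|\,|DV_\psi|\big)$ via the shared eigenspace decomposition and then absorbing the error by Young's inequality, is essentially a manual rederivation of the content of that lemma and would work in principle.

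The serious gap is your choice of regularization. With $\vp^\varepsilon(t)=\vp(t)+\tfrac{\varepsilon}{2}t^2$ and the analogous $\psi^\varepsilon$, the modified growth exponents become convex combinations
\[
\nu^\varepsilon(t)=\lambda(t)\,\nu(t)+\big(1-\lambda(t)\big),\qquad
\mu^\varepsilon(t)=\lambda'(t)\,\mu(t)+\big(1-\lambda'(t)\big),
\]
with mixing weights $\lambda=\vp'/(\vp'+\varepsilon t)$ and $\lambda'=\psi'/(\psi'+\varepsilon t)$ that are \emph{different} unless $\vp'=\psi'$. This breaks the closeness control: take $n$ large enough that $\tfrac{2(n-1)}{n-2}$ is close to $2$, and $\vp(t)=t^{p}$, $\psi(t)=c\,t^{p}$ with $p>3$ and $c\ll 1$ so that $\theta\equiv 1$ satisfies \eqref{eq:Intro-closeness-2} trivially. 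At scales $t$ where $\vp'(t)\gg\varepsilon t$ but $\psi'(t)\ll\varepsilon t$ (which exist because $c$ is small), one gets $\lambda\approx 1$, $\lambda'\approx 0$, hence $\theta^\varepsilon(t)\approx\nu(t)=p-1$, which violates the closeness bound. So the additive regularization you propose does not keep $\theta^\varepsilon$ uniformly below $\tfrac{2(n-1)}{n-2}$, and the matrix inequality fails in the worst regularized regime. The paper avoids exactly this by regularizing through $|\nabla u|\mapsto\sqrt{|\nabla u|^2+\epsilon}$ (applied to the vector field, not to $\vp$ or $\psi$ separately), which produces \emph{identical} mixing weights $t^2/(t^2+\epsilon)$ on both growth rates and therefore gives $\gamma(t)\le\max\{\theta(\sqrt{t^2+\epsilon}),1\}$ — see \eqref{eq:gamma-bounds}. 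A related, smaller omission: the paper needs a further mollification in $\kappa$ of $\vp,\psi$ and in $\delta$ of $u^\epsilon$ to obtain a genuine pointwise inequality before integrating, because $\vp,\psi\in C^2$ and $u^\epsilon\in W^{2,2}_{\loc}$ is not smooth enough for the Cordes inequality to hold classically; your sketch implicitly assumes everything is already $C^\infty$ and glosses over the two extra approximation limits.
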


The right hand side of \eqref{eq:main2-new} is finite due to the boundedness of $\nabla u$ and the assumption \eqref{eq:Intro-ratio-bounded}. 
Similarly as in the homogeneous case, the Caccioppoli--type inequality \eqref{eq:main2-new} yields a higher integrability result.

\begin{corollary} [Higher integrability for non-homogeneous problem] \label{thm_higher-integrability-2-new}
Under the assumptions of Theorem \ref{thm:main2-new}, there exists a positive constant $\delta>0$
such that $D(V_\psi(\nabla u))\in L^{2+\delta}_{\loc}(\Omega;\R^{n\times n})$. Additionally, 
\begin{align*}
    \left(\fint_{B_r} |D(V_\psi(\nabla u))|^{2+\delta} \, dx\right)^{1/(2+\delta)} 
    &\leq C 
    \Big(\left(\fint_{B_{2r}} |D(V_\psi(\nabla u))|^2 \, dx\right)^{1/2} \\
    &\quad
    +\Big(\fint_{B_{2r}}\Big(\frac{\psi'(|\nabla u|)}{\vp'(|\nabla u|)}f\Big)^{2+\delta}dx\Big)^{1/(2+\delta)}\Big)
\end{align*}
for all concentric balls $B_r\subset B_{2r}\Subset\Om$, where $C>0$ is a positive constant. The constants $\delta$ and $C$ depend on $n$, $p$, $q$, $\tilde{p}$, $\tilde{q}$, and the supremum in \eqref{eq:Intro-closeness-2}.
\end{corollary}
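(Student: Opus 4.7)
The plan is to deduce Corollary \ref{thm_higher-integrability-2-new} from the Caccioppoli--type inequality \eqref{eq:main2-new} via a Sobolev--Poincar\'e inequality and Gehring's lemma with a source term, following the same pattern that presumably establishes Corollary \ref{thm_higher-integrability-1} in the homogeneous case.

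First, write $V:=V_\psi(\nabla u)$ and $F:=\frac{\psi'(|\nabla u|)}{\vp'(|\nabla u|)}f$. Theorem \ref{thm:main2-new} guarantees $V\in W^{1,2}_{\loc}(\Om;\Rn)$, and on each ball $B_{2r}\Subset\Om$ the Sobolev--Poincar\'e inequality applied to each component of $V$ yields
\begin{equation*}
    \fint_{B_{2r}}|V-(V)_{B_{2r}}|^2\,dx \leq Cr^2\Big(\fint_{B_{2r}}|DV|^{2_*}\,dx\Big)^{2/2_*},\qquad 2_*:=\tfrac{2n}{n+2}.
\end{equation*}
Substituting this bound into \eqref{eq:main2-new}, dividing by $|B_r|$, and absorbing the volume ratio $|B_{2r}|/|B_r|$ into the constant produces the reverse H\"older type inequality
\begin{equation*}
    \fint_{B_r}|DV|^2\,dx \leq C\Big(\fint_{B_{2r}}|DV|^{2_*}\,dx\Big)^{2/2_*}+C\fint_{B_{2r}}F^2\,dx,
\end{equation*}
valid on every pair of concentric balls $B_r\subset B_{2r}\Subset\Om$.

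Next, I would invoke Gehring's lemma with source term (for instance in the Giaquinta--Modica formulation). The hypothesis $f\in L^\infty(\Om)$ combined with the local boundedness of $\nabla u$ from the Lieberman gradient estimates (cited just after Theorem \ref{thm:main1}) gives $F\in L^\infty_{\loc}(\Om)$, so $F$ lies locally in every $L^s$ and the higher integrability hypothesis on the source is automatic. Gehring's lemma therefore produces $\delta>0$ (depending only on $n,p,q,\tilde p,\tilde q$ and the supremum in \eqref{eq:Intro-closeness-2}) such that $|DV|\in L^{2+\delta}_{\loc}$, together with the quantitative bound
\begin{equation*}
    \Big(\fint_{B_r}|DV|^{2+\delta}\,dx\Big)^{\!1/(2+\delta)}\!\leq C\Big[\Big(\fint_{B_{2r}}|DV|^2\,dx\Big)^{\!1/2}+\Big(\fint_{B_{2r}}F^{2+\delta}\,dx\Big)^{\!1/(2+\delta)}\Big],
\end{equation*}
which is exactly the claim of Corollary \ref{thm_higher-integrability-2-new}.

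There is no genuine obstacle here: the argument is mechanical once Theorem \ref{thm:main2-new} is available. The only points worth checking carefully are that the Sobolev--Poincar\'e step requires no regularity of $V$ beyond the $W^{1,2}_{\loc}$ membership already supplied by Theorem \ref{thm:main2-new}, and that the version of Gehring's lemma invoked handles both a reverse H\"older left--hand side and an $L^\infty_{\loc}$ source — both are standard in the Giaquinta--Modica literature.
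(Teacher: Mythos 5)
Your proposal is correct and follows essentially the same route as the paper: Sobolev--Poincar\'e applied to the Caccioppoli estimate \eqref{eq:main2-new} to produce a reverse H\"older inequality for $|D(V_\psi(\nabla u))|^2$ with source $h=\bigl(\frac{\psi'(|\nabla u|)}{\vp'(|\nabla u|)}f\bigr)^2$, followed by Gehring's lemma with a source term, with the integrability of $h$ secured by $f\in L^\infty$, Lemma~\ref{lem:pointwise-bound-for-ratio} and the gradient bound \eqref{eq:u-is-C1}. No gaps.
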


Theorem \ref{thm:main2-new} and Corollary \ref{thm_higher-integrability-2-new} are interesting even in the $p$-Laplacian case $\Delta_pu=f$, as the non-homogeneous equation was not studied in the earlier work by Dong, Peng, Zhang and Zhou \cite{Dong2020} and the second author \cite{Sarsa2022}. Montoro, Muglia and Sciunzi show in their recent preprint \cite{Montoro2023} that $|\nabla u|^{\alpha-1}\nabla u\in W^{1,2}(\Om;\Rn)$ under the assumptions that $f\in W^{1,1}(\Om)\cap L^q(\Om)$ and $\alpha>\frac{p-1}{2}$. We compare our results to theirs in Section \ref{ssec:Discussion-optimality}.

Let us elaborate on the essence of our proofs. The main technical tool is a pointwise estimate, Lemma~\ref{lem:Inequality}, which can be found in \cite{Haarala2022}*{Lemma 3.2, Corollary 3.3}. It is based on Cordes' matrix inequalities \cite{Cordes1961}, see also \cite{Talenti1965}. Similar inequalities appear also in \cite{Cianchi2018}. 

We would like to use the pointwise estimate in the following form
\begin{equation} \label{eq:Intro-Ineq}
\begin{aligned}
    c|D(V_\psi(\nabla u))|^2
    &\leq
    \diverg((D(V_\psi(\nabla u))-\tr(D(V_\psi(\nabla u)))I)(V_\psi(\nabla u)-Z)) \\
    &\quad
    +C\Big(\frac{\psi'(|\nabla u|)}{\vp'(|\nabla u|)}\Big)^2\big(\diverg(V_\vp(\nabla u))\big)^2.
\end{aligned}
\end{equation}
Here $\tr$ refers to trace of a matrix, $I$ denotes the identity matrix and $Z\in\Rn$ is an arbitrary vector. This would require $C^3$-regularity for solution $u$, but  weak solutions of \eqref{eq:OL-equation} only have $C^{1,\alpha}$-regularity in general, see Lieberman \cites{Lieberman1991,Lieberman1992}. Hence a regularization of \eqref{eq:OL-equation} is necessary. 

We employ a similar regularization as in the work of Challal and Lyaghfouri \cite{Challal2010}, namely for $\epsilon>0$ we consider
$$ \diverg\Big(\frac{\vp'(\sqrtue)}{\sqrtue}\nabla\ue\Big)=f. $$
Weak solutions $\ue$ of the above equation belong to $W^{2,2}_{\loc}$, as shown by \cite{Challal2010}*{Theorem 2.1}. Nonetheless, we still cannot write the estimate \eqref{eq:Intro-Ineq} with $u$ replaced with $\ue$, due to the possible lack of regularity of the Orlicz functions $\vp$ and $\psi$, as well as the data $f$. To circumvent this problem, we prove a weak type version of inequality \eqref{eq:Intro-Ineq}, see Lemma \ref{lem:main-lemma-delta0-kappa0}. This is obtained by making further regularizations to both our Orlicz-functions $\vp$ and $\psi$ and weak solutions $\ue$. A careful analysis of convergences of regularization parameters and their dependencies on relevant constants allows us to generalize the pointwise inequality \eqref{eq:Intro-Ineq} to an integral form suitable for our main result.

This article is organized as follows. The second section contains all the preliminaries such as Orlicz functions, Orlicz spaces and the notation used in the article. It also contains the definition of the regularized equation and introduces the pointwise equality used as a main technical tool. The third section is devoted to the discussion of the assumptions of our main results. We discuss both the closeness function $\theta$ and the ratio $t\mapsto \psi'(t)/\vp'(t)$ that appears in the estimate \eqref{eq:main2-new} in the non-homogeneous case. We compute these functions in certain special cases, and thus we demonstrate various special corollaries of our main results. We also discuss the optimality question of our results. In the fourth section we work with the regularization schemes required for the use of the pointwise estimate. We prove a version of Theorem \ref{thm:main2-new} with $\ue$. The fifth section contains the passage to the limit when $\epsilon\to0$ to conclude the proofs of our main results.
\section{Preliminaries} \label{sec:prel}
\subsection{Orlicz functions, Orlicz spaces and Orlicz--Laplace equation}\label{ssec:orlicz-spaces}

In this section we state the precise definitions of Orlicz functions, Orlicz spaces, and weak solutions to the Orlicz--Laplace equation \eqref{eq:OL-equation}. We also recall some of their basic properties. For details, we refer to the monographs \cites{Bennett1988,Krasnoselskii1961,Rao1991,Adams2003}.

We say $\vp:[0,\infty) \to [0,\infty)$ is an Orlicz function if it is convex and satisfies $\lim_{t \to 0^+} \vp(t) = \vp(0)=0$ and $\lim_{t \to \infty} \vp(t) = \infty$. In this paper we consider $C^2$-regular Orlicz functions that satisfy $(p,q)$ growth conditions. More precisely, we assume $\vp\in C^1([0,\infty)] \cap C^2 ((0,\infty))$ and
\begin{equation} \label{eq:growth-rate}
    0<p-1\leq\dfrac{\vp''(t)t}{\vp'(t)} \leq q-1 < \infty.
\end{equation}
These assumptions imply the commonly known $\Delta_2$ and $\nabla_2$ conditions. Especially our Orlicz functions are doubling as is often assumed when dealing with regularity results. More precisely, $\vp(2t) \leq C \vp(t)$, where the constant $C$ depends only on $q$ and $\vp(1)$. The doubling condition excludes for instance exponential growth of the Orlicz function.

Common examples of Orlicz functions covered in this article include
\begin{itemize}
    \item $\vp(t) = \frac{1}{p}t^p$ for $p>1$;
    \item $\vp(t) = t^p \log^\alpha(C+t)$ for $p>1$, $\alpha\in\R$ and a (sufficiently large) constant $C>0$;
    \item $\vp(t) = t^p + at^q$ for $p,q>1$ and $a>0$;
    \item $C^2$-regular version of $\vp(t) = \max\{t^p,t^q\}$ for $p,q>1$;
    \item products and compositions of previous examples.
\end{itemize}

For the definition of Orlicz spaces, let $\Omega \subset \Rn$, $n\geq 2$, denote a bounded domain and $\vp$ an Orlicz function, as described above. We define the Orlicz space $L^\vp(\Om)$ as
\begin{equation*}
    L^\vp(\Omega) := \left \{u \in L^1(\Omega) : \lim_{\lambda \to 0^+} \int_\Omega \vp(\lambda |u|) \, dx = 0 \right\}.
\end{equation*}
This space equipped with the Luxemburg norm
\begin{equation*}
    \|u\|_{L^\vp(\Omega)} := \inf \left \{\lambda >0 : \int_\Omega \vp\left( \dfrac{|u|}{\lambda}\right) \, dx \leq 1 \right\}
\end{equation*}
is a Banach space. The Orlicz function $\vp(t)=\frac{1}{p}t^p$ generates the standard $L^p(\Omega)$-spaces, and $\vp(t)=t^p\log^\alpha(1+t)$ generates $L^p \log^\alpha L$-spaces, also known as Zygmund spaces.

We also need the Orlicz--Sobolev space $W^{1,\vp}(\Omega)$ which consists of functions $u\in L^\vp(\Omega)$ that posses weak derivatives belonging to $L^\vp(\Omega)$. The norm of this space is defined as
\begin{equation*}
    \|u\|_{W^{1,\vp}(\Omega)} := \|u\|_{L^\vp(\Omega)} + \||\nabla u|\|_{L^\vp (\Omega)},
\end{equation*}
which makes $W^{1,\vp}(\Om)$ into a reflexive Banach space. With a slight abuse of notation we abbreviate $\||\nabla u|\|_{L^\vp(\Omega)} $ as $\|\nabla u\|_{L^\vp(\Omega)}$. 

We say $u\in W^{1,\vp}(\Om)$ is a weak solution of Orlicz-Laplace equation \eqref{eq:OL-equation} if
\begin{equation*}
    \int_{\Om}\frac{\vp'(|\nabla u|)}{|\nabla u|}\la \nabla u,\nabla \eta\ra dx=\int_\Om f\eta dx
\end{equation*}
holds for all smooth compactly supported test functions $\eta\in\CcO$. It is well-known that such weak solutions $u$ belong to $C^{1,\alpha}_{\loc}(\Om)$ with $\alpha=\alpha(n,p,q)\in(0,1)$ and with the following estimate: for any subdomain $U\Subset\Om$
\begin{equation}\label{eq:u-is-C1}
    \|u\|_{C^{1,\alpha}(U)}\leq 
    C=C(n,p,q,\vp'(1),\|f\|_{L^\infty(\Om)},\|u\|_{L^\infty(\Om)},\dist(U,\partial\Om)).  
\end{equation}
See Lieberman \cites{Lieberman1991,Lieberman1992}. See also Hästö and Ok \cite{Hasto2022} for $f=0$.
\subsection{Approximate solution $\ue$ and its known regularity}

Let $u\in W^{1,\vp}(\Om)$ be a weak solution of \eqref{eq:OL-equation}. We approximate $u$ with solution $\ue$ of a regularized problem, similarly to \cite{Challal2010}. Namely, for a smooth subdomain $U\Subset \Om$ and $0<\epsilon\leq 1$, consider the Dirichlet problem
\begin{equation} \label{eq:Regularized-Dirichlet-Problem}
\begin{cases}
\begin{aligned}
-\diverg\Big(\frac{\vp'(\sqrtue)}{\sqrtue}\nabla\ue\Big)=f &\quad\text{in }U;\\
\ue=u &\quad\text{on }\partial U.
\end{aligned}
\end{cases}
\end{equation}
The problem \eqref{eq:Regularized-Dirichlet-Problem} has a unique solution in $W^{1,\vp}(U)$, see for instance \cite{Giusti2003}. Let us consider the regularity of the weak solution of the approximating problem \eqref{eq:Regularized-Dirichlet-Problem}. For the proof of the following results, we refer to \cite{Challal2010}.

We have that $\ue\in C^{1,\alpha}_{\loc}(U)$, with $\alpha=\alpha(n,p,q)\in(0,1)$, and by \cite{Challal2010}*{Theorem 2.1} $\ue\in W^{2,2}_{\loc}(U)$. In addition $\{\ue\}_\epsilon$ is uniformly bounded in $C^{1,\alpha}_{\loc}(U)$, namely, for any subdomain $U'\Subset U$
\begin{equation} \label{eq:Uniform-bound-in-C1alpha}
    \|\ue\|_{C^{1,\alpha}(U')}\leq 
    C=C(n,p,q,\vp'(1),\|f\|_{L^\infty(U)},\|u\|_{L^\infty(U)},\dist(U',\partial U)).
\end{equation}
By \cite{Challal2010}*{Lemma 3.3} we know that there exists a subsequence of $\{\ue\}_\epsilon$, still denoted by itself, such that 
\begin{equation} \label{eq:Convergence-of-ue-in-C1}
    \ue\xrightarrow{\epsilon\to0}u\quad\text{in }C^1_{\loc}(U).
\end{equation}
\subsection{Pointwise inequality} \label{ssec:inequality}
In this section we state the key tool of this article, a pointwise differential inequality for nonlinear gradient fields, Lemma \ref{lem:Inequality} below. Such inequality was studied in detail by Haarala and the second author in \cite{Haarala2022}*{Sections 2 and 3}. The proof relies on the work of Cordes \cite{Cordes1961} and Talenti \cite{Talenti1965}.

For the statement of the lemma, we introduce notation that is similar to \cite{Haarala2022}. This notation serves only for the lemma and its application Section \ref{ssec:application}.

Let $a,b\colon[0,\infty)\to(0,\infty)$ be $C^2$-functions and denote
\begin{align}
    \alpha(t):=\frac{a'(t)t}{a(t)}
    \quad\text{and}\quad
    \beta(t):=\frac{b'(t)t}{b(t)}
    \quad\text{and}\quad
    \gamma(t):=\frac{\alpha(t)+1}{\beta(t)+1}.
\end{align}
We denote the infima and suprema of these quantities over an interval $[0,C_1]$, where $C_1>0$, as follows:
\begin{align*}
    i_\alpha:=\inf\{\alpha(t): 0\leq t\leq C_1\} \quad\text{and}\quad
    s_\alpha:=\sup\{\alpha(t): 0\leq t\leq C_1\},
\end{align*}
and
$$ i_\beta:=\inf\{\beta(t): 0\leq t\leq C_1\} \quad\text{and}\quad s_\beta:=\sup\{\beta(t): 0\leq t\leq C_1\}, $$
and
$$ s_\gamma:=\sup\{\gamma(t): 0\leq t\leq C_1\}. $$
The lemma below holds also for $C_1=\infty$, but for the later use, we need to restrict ourselves to a bounded interval $[0,C_1]$.
    
\begin{lemma} \label{lem:Inequality}
Suppose that $a,b\colon[0,\infty)\to(0,\infty)$ are $C^2$-functions such that
\begin{align} \label{eq:ellipticity-for-ineq}
    -1<i_\alpha\leq s_\alpha<\infty
    \quad\text{and}\quad
    -1<i_\beta\leq s_\beta<\infty.
\end{align}
Suppose moreover that
\begin{equation} \label{eq:Cordes-for-AwrtB}
    s_\gamma<\frac{2(n-1)}{n-2}.
\end{equation}
Let $u\in C^3(\Omega)$ such that $\|\nabla u\|_{L^\infty(\Om)}\leq C_1$ and suppose that $V_a:=a(|\nabla   u|)\nabla   u\in C^2(\Om)$ and \\
$V_b:=b(|\nabla   u|)\nabla   u\in C^2(\Om)$. Then have the inequality
\begin{equation} \label{eq:Inequality}
    c|D  V_b|^2\leq
    \diverg((D  V_b-\tr(DV_b)I)(V_b-Z))
    +C\Big(\frac{b(|\nabla u|)}{a(|\nabla u|)}\Big)^2\big(\diverg(V_a)\big)^2,
\end{equation}
where $Z\in\Rn$ is an arbitrary vector and
$c=c(i_\alpha,s_\alpha,i_\beta,s_\beta,s_\gamma,n)>0$ and \\
$C=C(i_\alpha,s_\alpha,i_\beta,s_\beta,s_\gamma,n)>0$ are positive constants.
\end{lemma}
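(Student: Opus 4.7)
The plan is to combine three ingredients: an explicit differentiation of $V_a$ and $V_b$, a purely algebraic Bochner-type divergence identity, and a Cordes--Talenti matrix inequality whose positivity threshold matches \eqref{eq:Cordes-for-AwrtB}. First I would set $p := \nabla u$ and $H := D^2 u$ and compute directly
\begin{equation*}
DV_b = b(|p|) H + \frac{b'(|p|)}{|p|}\, p \otimes Hp, \qquad \diverg V_a = a(|p|) \Delta u + \frac{a'(|p|)}{|p|} \la Hp, p\ra,
\end{equation*}
with the analogous formula for $DV_a$. Thus $DV_b$ is a rank-one perturbation of $b(|p|) H$ whose coefficients are governed by $\alpha$, and similarly for $DV_a$ via $\beta$; in particular $DV_b$ and $DV_a$ are both linear in $H$.

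Second, I would invoke the algebraic identity
\begin{equation*}
\sum_{i,j} (\partial_j V_i)(\partial_i V_j) = (\diverg V)^2 + \diverg\bigl((DV - \tr(DV) I)(V - Z)\bigr),
\end{equation*}
valid for any smooth vector field $V$ on $\Om$ and any constant $Z \in \Rn$. A short calculation shows that $DV - \tr(DV) I$ is row-wise divergence free, which both yields the identity after integration by parts and makes the right-hand side independent of $Z$. Applied to $V = V_b$, it rewrites the ``off-diagonal'' form $\sum_{ij}(\partial_j V_{b,i})(\partial_i V_{b,j})$ as $(\diverg V_b)^2$ plus the exact divergence term that appears on the right-hand side of \eqref{eq:Inequality}.

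Third, I would apply the matrix Cordes--Talenti inequality of \cite{Cordes1961,Talenti1965}, adapted to rank-one-perturbed Hessians of the form $b(|p|) H + (b'(|p|)/|p|) p \otimes Hp$ in \cite{Haarala2022}*{Lemma 3.2, Corollary 3.3}. Under the ellipticity $i_\alpha, i_\beta > -1$ and the Cordes threshold $s_\gamma < 2(n-1)/(n-2)$, this furnishes constants $c, C > 0$ depending on $n, i_\alpha, s_\alpha, i_\beta, s_\beta, s_\gamma$ such that
\begin{equation*}
c|DV_b|^2 \leq \sum_{i,j} (\partial_j V_{b,i})(\partial_i V_{b,j}) - (\diverg V_b)^2 + C\Big(\frac{b(|p|)}{a(|p|)}\Big)^2 (\diverg V_a)^2.
\end{equation*}
Substituting the identity of the second step into the right-hand side of this inequality yields exactly \eqref{eq:Inequality}.

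The main obstacle is step three: verifying that the positivity of the constant $c$ is ensured by the explicit bound $s_\gamma < 2(n-1)/(n-2)$ and nothing more. This reduces to a spectral analysis of the quadratic form in $(H, p\otimes Hp)$ coming from $|DV_b|^2 - \sum_{ij}(\partial_j V_{b,i})(\partial_i V_{b,j}) + (\diverg V_b)^2$, balanced against $(b/a)^2 (\diverg V_a)^2$. The Cordes condition plays the role of the classical eigenvalue-spread bound that makes the relevant quadratic form pointwise nonnegative, and the explicit constant $2(n-1)/(n-2)$ is the sharp threshold at which this spectral inequality closes; this algebraic verification is carried out in detail in \cite{Haarala2022} and is what we rely on here.
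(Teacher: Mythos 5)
The paper does not prove this lemma; it imports it verbatim from \cite{Haarala2022}*{Lemma 3.2, Corollary 3.3}, and your outline — the explicit derivative formulas for $DV_a$ and $DV_b$, the algebraic identity converting $\sum_{i,j}(\partial_j V_i)(\partial_i V_j)-(\diverg V)^2$ into the exact divergence $\diverg\big((DV-\tr(DV)I)(V-Z)\big)$, and the Cordes--Talenti spectral step — is a faithful reconstruction of that cited argument. Your proposal is therefore correct and follows essentially the same route as the paper, in particular deferring the core spectral verification (that the Cordes threshold $s_\gamma<\frac{2(n-1)}{n-2}$ yields positive $c$) to \cite{Haarala2022}, exactly as the paper does.
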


\begin{remark}
In the original statement of the above Lemma in \cite{Haarala2022}, we have the additional condition
$$ i_\gamma:=\inf\{\gamma(t):0\leq t\leq C_1\}>0. $$
However, this follows trivially from \eqref{eq:ellipticity-for-ineq}.
\end{remark}

\begin{remark}
Formally, we want to employ the above lemma with
$$ a(t)=\frac{\vp'(t)}{t}\quad\text{and}\quad b(t)=\frac{\psi'(t)}{t}. $$
Proceeding with a formal computation,
$$ \alpha(t)=\frac{a'(t)t}{a(t)} = \nu(t)-1 
\quad\text{and}\quad
\beta(t)=\frac{b'(t)t}{b(t)} = \mu(t)-1
$$
and moreover,
$$ \gamma(t)=\frac{\alpha(t)+1}{\beta(t)+1}
=\frac{\nu(t)}{\mu(t)}=\theta(t).
$$
The quantities $\nu$ and $\mu$ are introduced Section \ref{ssec:closenss} below. See Section \ref{ssec:application} for a rigorous proof built upon the above computation.
\end{remark}
\section{Orlicz function $\psi$ close to $\vp$} \label{sec:intermediate}
\subsection{Closeness function and ratio function of two Orlicz functions} \label{ssec:closenss}

Let $\vp$ be an Orlicz function, as described in Section \ref{sec:prel}. For our investigation we need another Orlicz function, that we denote by $\psi$ throughout the article. We assume that $\psi\in C^1([0,\infty))\cap C^2((0,\infty))$ and that $\psi$ satisfies $(\tilde p, \tilde q)$ growth conditions for some $1<\tilde{p}\leq\tilde{q}<\infty$. 

Given the Orlicz functions $\vp$ and $\psi$, we denote
\begin{equation} \label{eq:nu-and-mu}
    \nu(t):=\frac{\vp''(t)t}{\vp'(t)} 
    \quad\text{and}\quad 
    \mu(t):=\frac{\psi''(t)t}{\psi'(t)}.
\end{equation}
We define the closeness function of $\vp$ and $\psi$ by
\begin{equation} \label{eq:theta}
    \theta(t) := \frac{\nu(t)}{\mu(t)}.
\end{equation}
The role of the closeness function is to measure the pointwise difference of growth rates of $\vp$ and $\psi$. It is a continuous and bounded function on $(0,\infty)$ and
\begin{equation} \label{eq:trivial-bounds-for-theta}
    \frac{p-1}{\tilde{q}-1}\leq \theta(t)\leq \frac{q-1}{\tilde{p}-1}
\end{equation}
due to the $(p,q)$ growth of $\vp$ and $(\tilde{p},\tilde{q})$ growth of $\psi$.

For our main results, we assume a potentially stronger upper bound for the closeness function $\theta$. The closeness condition \eqref{eq:Intro-closeness} in Theorem \ref{thm:main1} stands as
\begin{equation} \label{eq:closenss-for-discussion}
    s_\theta:=\sup\{\theta(t):t>0\} <\frac{2(n-1)}{n-2}.
\end{equation}
Notice that the bounds \eqref{eq:trivial-bounds-for-theta} imply that whenever
$$ \frac{q-1}{\tilde{p}-1}<\frac{2(n-1)}{n-2}, $$
then the closeness condition \eqref{eq:closenss-for-discussion} follows trivially from the controlled growth rates of $\vp$ and $\psi$. In particular this is always the case when $n=2$.

For Theorem \ref{thm:main2-new}, which concerns the non-homogeneous equation, we need the ratio function of $\vp$ and $\psi$, given by
$$ \rho(t):=\frac{\psi'(t)}{\vp'(t)}. $$
Due to the growth conditions of $\vp$ and $\psi$, the ratio function $\rho$ satisfies
$$ \rho(t)\simeq \frac{\psi(t)}{\vp(t)}. $$
The multiplicative constants depend on $p$, $q$, $\tilde{p}$ and $\tilde{q}$. For Theorem \ref{thm:main2-new} we need that
\begin{align} \label{eq:ratio-bounded}
    s_\rho:=\sup\{\rho(t):0\leq t\leq 1\}<\infty.
\end{align}
Roughly speaking, this means that $\psi(t)$ tends to zero faster than $\vp(t)$, when $t\to0$.
\subsection{Discussion on special choices of $\psi$} \label{ssec:Discussion}

In this section we present some examples of the closensss function $\theta$ and the ratio function $\rho$ for various Orlicz functions $\vp$ and $\psi$. We discuss what kind of regularity results they imply in light of Theorems \ref{thm:main1} and \ref{thm:main2-new}, and Corollaries \ref{thm_higher-integrability-1} and \ref{thm_higher-integrability-2-new}.

\begin{example}[Power functions] \label{ex:powers}
The basic example is given by choosing $\vp$ and $\psi$ to be power functions. Set $\vp(t)=\frac{1}{p}t^p$ and $\psi(t)=\frac{1}{\beta+2}t^{\beta+2}$ for $p>1$ and $\beta>-1$. Then the Orlicz--Laplace equation \eqref{eq:OL-equation} reduces to the usual $p$-Laplace equation. In this case
$$ \theta(t)\equiv\frac{p-1}{\beta+1} \quad\text{and}\quad 
\rho(t)=t^{\beta+1-(p-1)} $$
The closeness condition \eqref{eq:closenss-for-discussion} and the ratio condition \eqref{eq:ratio-bounded} hold whenever
$$ \beta+1>\frac{(n-2)(p-1)}{2(n-1)}\quad\text{and}\quad
\beta+1\geq p-1,
$$
respectively. Consequently, $|\nabla u|^\beta\nabla u\in W^{1,2+\delta}_{\loc}(\Om)$ for weak solutions of $\Delta_pu=0$, holds whenever \\
$\beta>-1+\frac{(n-2)(p-1)}{2(n-1)}$. This result was obtained in \cites{Dong2020,Sarsa2022}. See also \cite{Bojarski1987}*{Section 2} for the planar case $n=2$. Moreover, $|\nabla u|^\beta\nabla u\in W^{1,2+\delta}_{\loc}(\Om;\Rn)$ for weak solutions of $\Delta_pu=f$, holds whenever $\beta\geq p-2$.
\end{example}

\begin{example}[Natural quantity for non-homogeneous equation] \label{ex:natural-non-homog}
If $\psi=\vp$, then
$$ \theta(t)\equiv1 \quad\text{and}\quad 
\rho(t)\equiv 1. $$
Both the closeness condition \eqref{eq:closenss-for-discussion} and the ratio condition \eqref{eq:ratio-bounded} hold trivially.

Consequently $\frac{\vp'(|\nabla u|)}{|\nabla u|}\nabla u \in W^{1,2+\delta}_{\loc}(\Omega;\Rn)$ for weak solutions of the Orlicz--Laplace equation \eqref{eq:OL-equation} whenever $\vp$ satisfies $\Delta_2$ and $\nabla_2$ conditions. For $\vp(t) = \frac{1}{p}t^p$, this recovers the result ${|\nabla u|^{p-2}\nabla u \in W^{1,2+\delta}_{\loc}(\Omega;\Rn)}$. This type of regularity was studied by Cianchi and Maz'ya \cite{Cianchi2018} under the assumption that $L^2(\Om)$.
\end{example}

\begin{example}[$W^{2,2}_{\loc}$-regularity] 
If $\psi(t)=\frac{1}{2}t^2$, then
$$ \theta(t)=\nu(t) \quad\text{and}\quad 
\rho(t)=\frac{t}{\vp'(t)}. $$
The closeness condition \eqref{eq:closenss-for-discussion} and the ratio condition \eqref{eq:ratio-bounded} hold whenever
$$ q<3+\frac{2}{n-2}\quad\text{and}\quad q\leq 2, $$
respectively. In this cases we obtain that $u\in W^{2,2+\delta}_{\loc}(\Om)$. This is in line with the range $1<p<3+\frac{2}{n-2}$ for solutions of $\Delta_p u=0$, due to Manfredi and Weitsman \cite{Manfredi1988}.
\end{example}

\begin{example}[Natural quantity for homogeneous equation] \label{ex:CL-improvement}
If $\psi'(t)=\sqrt{\varphi'(t)t}$, then 
$$ \theta(t)=\frac{2\vp''(t)t}{\vp''(t)t+\vp'(t)}
\quad\text{and}\quad 
\rho(t)=\sqrt{\frac{t}{\vp'(t)}}. $$
The closeness condition \eqref{eq:closenss-for-discussion} is always satisfied, and the ratio condition \ref{eq:ratio-bounded} holds whenever $q\leq2$.

Consequently, $\sqrt{\frac{\varphi'(|\nabla u|)}{|\nabla u|}} \nabla u \in W^{1,2+\delta}_{\loc}(\Omega;\Rn)$ for weak solutions of the homogeneous Orlicz--Laplace equation. This regularity result is known ($\delta=0$), see for instance \cite{Diening2009}*{Corollary 3.7}. In the case of $\vp(t) = \frac{1}{p}t^p$  we recover the well-known result that $|\nabla u|^{\frac{p-2}{2}}\nabla u \in W^{1,2+\delta}_{\loc}(\Omega;\Rn)$ for all $1<p<\infty$. 

For the non-homogeneous Orlicz--Laplace equation we obtain that  $\sqrt{\frac{\varphi'(|\nabla u|)}{|\nabla u|}} \nabla u \in W^{1,2+\delta}_{\loc}(\Omega;\Rn)$ whenever $q\leq 2$. This type of regularity (with $\delta=0$) was obtained earlier by Challal and Lyaghfouri \cite{Challal2010}.
\end{example}
\subsection{Discussion on optimality} \label{ssec:Discussion-optimality}
We discuss the optimality of Theorems \ref{thm:main1} and \ref{thm:main2-new} in the simple case of power functions, when $\vp(t)=\frac{1}{p}t^p$ and $\psi(t)=\frac{1}{\beta+2}t^{\beta+2}$.

In this case Theorem \ref{thm:main1} yields a result for $p$-harmonic functions. The closeness condition \eqref{eq:Intro-closeness} holds if
$$ \beta>-1+\frac{(n-2)(p-1)}{2(n-1)}. $$
In the planar case $n=2$ this reduces to 
$$ \beta>-1. $$
This is clearly the best range one can hope for in our set up, since for $\beta=-1$ the Orlicz function $\psi$ fails to satisfy the desired growth conditions.

Besides, the simple example of the planar harmonic function $w(x)=x_1^2-x_2^2$ shows that in general \\
$|\nabla u|^{-1}\nabla u\notin W^{1,2}_{\loc}(\R^2;\R^2)$. Indeed, $|\nabla w|^{-1}\nabla w=|x|^{-1}x$, and
\begin{align*}
    D(|x|^{-1}x)
    &=|x|^{-1}(I-|x|^{-2}x\otimes x)
\end{align*}
and
$$ |D(|x|^{-1}x)|^2=2|x|^{-2}\notin L^1_{\loc}(\R^2) $$
For a proof that the range $\beta>-1$ is optimal when $p\neq 2$, we refer to \cite{Dong2022}*{Appendix A} and \cite{Iwaniec1989}.

To the best of our knowledge, it is not known if the range $\beta>-1+\frac{(n-2)(p-1)}{2(n-1)}$ is optimal in higher dimensions $n\geq3$. Nonetheless, the method of this article, which relies on Cordes' matrix inequalities, is not capable to reach better range.

Theorem \ref{thm:main2-new} yields the $W^{1,2}_{\loc}$-regularity of $|\nabla u|^\beta\nabla u$ for solutions of $\Delta_p u=f$, where $f\in L^\infty$. As demonstrated in Example \ref{ex:powers}, in this case the ratio condition dominates the closeness condition, and Theorem \ref{thm:main2-new} is applicable on the range
$$ \beta\geq p-2. $$
This is indeed a natural assumption in order to guarantee that the last integral in the estimate \eqref{eq:main2-new} of Theorem \ref{thm:main2-new},
$$ \int \big(|\nabla u|^{\beta-(p-2)}f\big)^2dx, $$
is finite. The endpoint $\beta=p-2$ yields Sobolev regularity for the stress field $|\nabla u|^{p-2}\nabla u$.

However, the range $\beta\geq p-2$ might not be optimal. For example, Montoro, Muglia and Sciunzi \cite{Montoro2023} cover a wider range of $\beta$, namely they prove that $|\nabla u|^\beta\nabla u\in W^{1,2}(\Om)$ if $\beta>-1+\frac{p-1}{2}$, under the assumption that $f\in W^{1,1}(\Om)\cap L^q(\Om)$. By \cite{Montoro2023}*{Remark 1.4}, the range $\beta>-1+\frac{p-1}{2}$ is the best one can hope for, since 
$$ v(x)=\tfrac{p-1}{p}|x_1|^{\frac{p}{p-1}} $$
solves $\Delta_p v=1$ and $D(|\nabla v|^\beta\nabla v)\in L^2_{\loc}$ if and only if $\beta>-1+\frac{p-1}{2}$. 
\section{Main result for the approximating solution $\ue$}
In this section we prove an integral inequality, similar to \eqref{eq:main2-new}, for the approximating solution $\ue$. To this end, we denote
$$ \Ve_\vp(\nabla \ue):=\frac{\vp'(\sqrtue)}{\sqrtue}\nabla\ue 
\quad\text{and}\quad 
\Ve_\psi(\nabla \ue):=\frac{\psi'(\sqrtue)}{\sqrtue}\nabla\ue. $$
We emphasize that throughout this section $0<\epsilon\leq 1$ is a fixed parameter.

\begin{proposition} \label{prop:Regularization-Main}
Let $\vp$ be a $C^2$-regular Orlicz function with $(p,q)$ growth and let $\ue\in W^{1,\vp}(U)$ be a weak solution of \eqref{eq:Regularized-Dirichlet-Problem}. Suppose that $\psi$ is another $C^2$-regular Orlicz function with $(\tilde{p},\tilde{q})$ growth and $\Ve_\psi(\nabla u):=\frac{\psi'(\sqrtue)}{\sqrtue}\nabla\ue$. If the closeness function $\theta$ of $\vp$ and $\psi$, as defined in \eqref{eq:Intro-closeness}, is bounded as follows,
\begin{equation} \label{eq:Regularization-main-closeness}
    s_\theta=\sup\{\theta(t):t>0\}<\frac{2(n-1)}{n-2},
\end{equation} 
then $\Ve_\psi(\nabla \ue)\in W^{1,2}_{\loc}(U;\Rn)$. Moreover, for all concentric balls $B_r\subset B_{2r}\Subset U$ we have the local estimate
\begin{equation} \label{eq:Regularization-main}
\begin{aligned}
    \int_{B_r}|D(\Ve_\psi(\nabla \ue))|^2dx
    &\leq 
    \frac{C}{r^2}\int_{B_{2r}}|\Ve_\psi(\nabla \ue)-(\Ve_\psi(\nabla \ue))_{B_{2r}}|^2dx \\
    &\quad 
    +C\int_{B_{2r}}\Big(\frac{\psi'(\sqrtue)}{\vp'(\sqrtue)}f\Big)^2dx,
\end{aligned}    
\end{equation}
where $C=C(n,p,q,\tilde{p},\tilde{q},s_\theta)>0$ is independent of $\epsilon$.
\end{proposition}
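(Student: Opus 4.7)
The plan is to derive \eqref{eq:Regularization-main} by applying the pointwise inequality of Lemma \ref{lem:Inequality} to suitable regularizations of $\ue$ and of the Orlicz functions, integrating against a squared cutoff, and at the end using that $\diverg \Ve_\vp(\nabla \ue) = -f$ almost everywhere --- a consequence of $\ue \in W^{2,2}_{\loc}(U)$. Since Lemma \ref{lem:Inequality} requires $C^3$-regularity for the solution and for the Orlicz functions (so that $V_a, V_b \in C^2$), I would set $\ue_\delta := \ue * \rho_\delta$ for a standard mollifier, and independently approximate $\vp$ and $\psi$ by $C^3$-smooth Orlicz functions $\vp_\kappa, \psi_\kappa$ whose $(p,q)$- and $(\tilde p, \tilde q)$-growth conditions are preserved uniformly in $\kappa$, with $\vp_\kappa \to \vp$ and $\psi_\kappa \to \psi$ in $C^2_{\loc}((0,\infty))$.

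Apply Lemma \ref{lem:Inequality} to $\ue_\delta$ with the choice
$$ a_\kappa(t) := \frac{\vp_\kappa'(\sqrt{t^2+\ve})}{\sqrt{t^2+\ve}}, \qquad b_\kappa(t) := \frac{\psi_\kappa'(\sqrt{t^2+\ve})}{\sqrt{t^2+\ve}}, $$
so that $V_{a_\kappa}(\nabla \ue_\delta)$ and $V_{b_\kappa}(\nabla \ue_\delta)$ are natural regularizations of $\Ve_\vp(\nabla \ue)$ and $\Ve_\psi(\nabla \ue)$. A direct calculation yields
$$ \alpha_\kappa(t) + 1 = \frac{t^2\,\nu_\kappa(\sqrt{t^2+\ve}) + \ve}{t^2 + \ve}, \qquad \gamma_\kappa(t) = \frac{t^2\,\nu_\kappa(\sqrt{t^2+\ve}) + \ve}{t^2\,\mu_\kappa(\sqrt{t^2+\ve}) + \ve}, $$
with the analogous formula for $\beta_\kappa$, where $\nu_\kappa, \mu_\kappa$ denote the analogues of $\nu, \mu$ for $\vp_\kappa, \psi_\kappa$. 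The $(p,q)$-growth of $\vp_\kappa$ forces $\nu_\kappa \geq p-1 > 0$ uniformly, and similarly $\mu_\kappa \geq \tilde p - 1 > 0$, giving $i_{\alpha_\kappa}, i_{\beta_\kappa} > -1$ with uniform bounds. Moreover $\gamma_\kappa(t)$ lies between $1$ and $\theta_\kappa(\sqrt{t^2+\ve})$, so $s_{\gamma_\kappa} \leq \max(1, s_\theta) < \tfrac{2(n-1)}{n-2}$. Thus the constants $c, C$ produced by Lemma \ref{lem:Inequality} depend only on $n, p, q, \tilde p, \tilde q, s_\theta$, independently of $\ve, \delta, \kappa$.

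Next, take $\eta \in C_c^\infty(B_{2r})$ with $\eta \equiv 1$ on $B_r$ and $|\nabla \eta| \leq C/r$, multiply the pointwise inequality by $\eta^2$, integrate over $B_{2r}$, and integrate by parts in the divergence term with $Z := (V_{b_\kappa}(\nabla \ue_\delta))_{B_{2r}}$. Cauchy--Schwarz, Young's inequality and absorption of the good term yield a Caccioppoli-type estimate for the regularized fields. Then pass to the limit in the order $\delta \to 0$ (using $\ue_\delta \to \ue$ in $W^{2,2}_\loc(U) \cap C^1_\loc(U)$ combined with the smoothness of $a_\kappa, b_\kappa$ and the uniform bound \eqref{eq:Uniform-bound-in-C1alpha} on $\nabla \ue$) and then $\kappa \to 0$ (via $C^2_\loc((0,\infty))$-convergence of the Orlicz approximants and dominated convergence). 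This yields the Caccioppoli-type estimate for $\ue$ with $\vp, \psi$, and finally substituting $\diverg \Ve_\vp(\nabla \ue) = -f$ in the last remaining term produces \eqref{eq:Regularization-main}.

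The main obstacle will be the bookkeeping needed to keep the constants uniform in the regularization parameters $(\ve, \delta, \kappa)$ --- in particular verifying that the quantities in \eqref{eq:ellipticity-for-ineq} and \eqref{eq:Cordes-for-AwrtB} associated with $a_\kappa, b_\kappa$ are controlled only by $s_\theta$ and the growth exponents --- and justifying the strong $L^2_\loc$-convergence of $\diverg V_{a_\kappa}(\nabla \ue_\delta) \to -f$ as the parameters tend to zero, which hinges on the $W^{2,2}_\loc$-regularity of $\ue$ from \cite{Challal2010} and the uniform $C^{1,\alpha}_\loc$-bound on $\nabla \ue$.
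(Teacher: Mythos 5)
Your proposal follows essentially the same route as the paper: double regularization (mollifying $\ue$ and the Orlicz functions), applying the pointwise Cordes-type inequality of Lemma~\ref{lem:Inequality} with $a(t)=\vp_\kappa'(\sqrt{t^2+\epsilon})/\sqrt{t^2+\epsilon}$ and $b(t)=\psi_\kappa'(\sqrt{t^2+\epsilon})/\sqrt{t^2+\epsilon}$, integrating against a squared cutoff, absorbing via Young's inequality, then passing $\delta\to0$, $\kappa\to0$, and finally invoking $\diverg\Ve_\vp(\nabla\ue)=f$ a.e. The one imprecision is your claim that the mollified exponents satisfy $\nu_\kappa\geq p-1$ and $s_{\gamma_\kappa}\leq\max(1,s_\theta)$ \emph{uniformly}; in fact these bounds degrade slightly under mollification (to $\nu_\kappa\geq\tfrac{1}{2}(p-1)$ and $\theta_\kappa\leq\tfrac{1}{2}(s_\theta+\tfrac{2(n-1)}{n-2})$ for $\kappa$ below a threshold $\kappa_0(\epsilon)$), which still keeps the Cordes condition strict and the constants $\epsilon$-independent, so the argument goes through.
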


Proposition \ref{prop:Regularization-Main} is a version of \cite{Challal2010}*{Theorem 2.3} containing two vector fields. Note that the right hand side is finite since $\nabla \ue$ and $f$ are bounded and $\varepsilon>0$.

The proof of Proposition \ref{prop:Regularization-Main} follows relatively easily from the pointwise inequality \eqref{eq:Inequality} of Lemma \ref{lem:Inequality}, if the Orlicz functions $\vp$ and $\psi$ and the data $f$ are known to be smooth. The smoothness of $\vp$ (and $f$) implies, by the standard elliptic regularity theory, that also $\ue$ is smooth. Consequently, one can check that if \eqref{eq:Regularization-main-closeness} holds, then Lemma \ref{lem:Inequality} is applicable with
\begin{equation} \label{eq:Va-and-Vb-if-Orlicz-functions-smooth}
    V_a=\Ve_\vp(\nabla\ue)\quad\text{and}\quad V_b=\Ve_{\psi}(\nabla\ue).
\end{equation}
The integral estimate \eqref{eq:Regularization-main} then follows by integrating the pointwise inequality \eqref{eq:Inequality} against a suitable cutoff function. 

In this paper we cover also the case when $\vp$ and $\psi$ (and $f$) do not posses enough regularity for the vector fields \eqref{eq:Va-and-Vb-if-Orlicz-functions-smooth} to belong in $C^2$. We employ the standard convolution mollification to the Orlicz functions $\vp$ and $\psi$, as well as to the solution $\ue$. We denote the resulting mollifications by $\vp_\kappa$, $\psi_\kappa$ and $\ued$, for $\kappa,\delta>0$. We check that if \eqref{eq:Regularization-main-closeness} holds, then Lemma \ref{lem:Inequality} is applicable with
$$ V_a:=\Ve_{\vp_\kappa}(\nabla \ued) 
\quad\text{and}\quad
V_b:=\Ve_{\psi_\kappa}(\nabla \ued).
$$
We integrate the resulting pointwise inequality against a cutoff function, and after integration by parts we can let $\delta\to0$ and $\kappa\to0$ to achieve Proposition \ref{prop:Regularization-Main}.
\subsection{Mollification of Orlicz functions} \label{sec:add-approx-kappa}
Let 
$$ \zeta(t):=
\begin{cases}
c\exp\big(\frac{1}{t^2-1}\big) &\quad\text{if }|t|<1; \\
0 &\quad\text{if }|t|\geq 1,
\end{cases}
$$
where the constant $c>0$ is fixed such that $\int_\R\zeta(t)dt=1$. This is the standard one-dimensional mollifier.

For $\kappa>0$ small, set $\zeta_\kappa(t):=\tfrac{1}{\kappa}\zeta(\tfrac{t}{\kappa})$. Let us assume that $0<\kappa<\tfrac{1}{2}\sqrt{\epsilon}$ and consider the smooth mollifications
$$ \vp_\kappa\colon[\tfrac{1}{2}\sqrt{\epsilon},\infty)\to(0,\infty)
\quad\text{and}\quad
\psi_\kappa\colon[\tfrac{1}{2}\sqrt{\epsilon},\infty)\to(0,\infty) $$
given by
$$ \vp_\kappa(t)=\int_{-\kappa}^\kappa\vp(t-s)\zeta_\kappa(s)ds
\quad\text{and}\quad
\psi_\kappa(t)=\int_{-\kappa}^\kappa\psi(t-s)\zeta_\kappa(s)ds. $$
Then
\begin{equation} \label{eq:Convergence-of-regularized-Orlicz-f}
    \vp_\kappa\xrightarrow{\kappa\to 0}\vp
    \quad\text{and}\quad
    \psi_\kappa\xrightarrow{\kappa\to 0}\psi 
    \quad\text{in $C^2([\sqrt{\epsilon},M])$ for any $M>\sqrt{\epsilon}$}.    
\end{equation}
For later use, we denote
$$ \nuk(t):=\frac{\vp_\kappa''(t)t}{\vp_\kappa'(t)} 
\quad\text{and}\quad 
\muk(t):=\frac{\psi_\kappa''(t)t}{\psi_\kappa'(t)}, $$
and
$$ \theta_\kappa(t):=\frac{\nuk(t)}{\muk(t)}. $$
These are the mollified versions of our main quantities \eqref{eq:nu-and-mu} and \eqref{eq:theta}, respectively. It follows directly from \eqref{eq:Convergence-of-regularized-Orlicz-f} that 
\begin{equation} \label{eq:Convergence-of-regualrized-growth-rates}
    \nu_\kappa\xrightarrow{\kappa\to0}\nu
    \quad\text{and}\quad
    \mu_\kappa\xrightarrow{\kappa\to0}\mu 
    \quad\text{and}\quad
    \theta_\kappa\xrightarrow{\kappa\to0}\theta
    \quad\text{in }C([\sqrt{\epsilon},M]).
\end{equation}

The following lemma says that the $(p,q)$-growth of $\vp$ implies $(\frac{1}{2}(p+1),q+1)$-growth of $\vp_\kappa$, provided that $\kappa$ is small enough. Similarly, $(\tilde{p},\tilde{q})$-growth of $\psi$ implies $(\frac{1}{2}(\tilde{p}+1),\tilde{q}+1)$-growth of $\psi_\kappa$ and the closeness of $\vp$ and $\psi$, in the sense of \eqref{eq:Regularization-main-closeness}, implies the same closeness of $\vp_\kappa$ and $\psi_\kappa$, provided that $\kappa$ is small enough.

\begin{lemma} \label{lem:Growths-for-mollified-Orlicz-functions}
Suppose that $\varphi,\psi\colon[0,\infty)\to[0,\infty)$ are Orlicz-functions such that
\begin{align}
    \begin{cases}
    0<p-1\leq \nu(t)\leq q-1<\infty \\
    0<\tilde{p}-1\leq \mu(t)\leq \tilde{q}-1<\infty \\
    \theta(t)\leq s_\theta <\frac{2(n-1)}{n-2}
    \end{cases}
\end{align}
for all $t\in[0,\infty)$.
Let $0<\epsilon<M$ be fixed.
Then we can find a small positive constant
$$ \kappa_0=\kappa_0(\epsilon,s_\theta,p,q,\tilde{p},\tilde{q},M), $$ 
such that $0<\kappa<\kappa_0$ implies that
\begin{align}
    \begin{cases}
    0<\frac{1}{2}(p-1)\leq \nu_\kappa(t)\leq q<\infty \\
    0<\frac{1}{2}(\tilde{p}-1)\leq \mu_\kappa(t)\leq \tilde{q}<\infty \\
    \theta_\kappa(t)\leq \frac{1}{2}(s_\theta+\frac{2(n-1)}{n-2})<\frac{2(n-1)}{n-2}
    \end{cases}
\end{align}
for all $t\in[\sqrt{\epsilon},M]$.
\end{lemma}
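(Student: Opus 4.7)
The plan is to rely entirely on the uniform convergence statements \eqref{eq:Convergence-of-regularized-Orlicz-f} and \eqref{eq:Convergence-of-regualrized-growth-rates} that have just been recorded. On the compact interval $[\sqrt{\epsilon}, M]\subset(0,\infty)$ the derivatives $\vp'$ and $\psi'$ are continuous and strictly positive, hence bounded away from zero. Combined with the $C^2([\sqrt{\epsilon},M])$-convergence of $\vp_\kappa\to\vp$ and $\psi_\kappa\to\psi$, this upgrades to uniform convergence $\nu_\kappa\to\nu$, $\mu_\kappa\to\mu$, and $\theta_\kappa\to\theta$ on $[\sqrt{\epsilon},M]$; indeed, this is precisely what \eqref{eq:Convergence-of-regualrized-growth-rates} asserts.

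Once uniform convergence is in hand, each of the five required inequalities reduces to a single quantitative statement about how small $\kappa$ must be taken. For the lower bound on $\nu_\kappa$, it is enough to pick $\kappa$ with $\|\nu_\kappa-\nu\|_{C([\sqrt{\epsilon},M])}<\tfrac{1}{2}(p-1)$, since $\nu\geq p-1$ on $[0,\infty)$. For the upper bound, it is enough that the same sup-norm distance be less than $1$, since $\nu\leq q-1$. The two bounds on $\mu_\kappa$ follow by the identical argument with $(\tilde p,\tilde q)$ in place of $(p,q)$. For the closeness function, the hypothesis $s_\theta<\tfrac{2(n-1)}{n-2}$ provides a positive slack, and it suffices to arrange $\|\theta_\kappa-\theta\|_{C([\sqrt{\epsilon},M])}<\tfrac{1}{2}\bigl(\tfrac{2(n-1)}{n-2}-s_\theta\bigr)$.

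Taking $\kappa_0$ to be the minimum of the five thresholds arising above (together with the standing constraint $\kappa_0<\tfrac{1}{2}\sqrt{\epsilon}$ that guarantees the mollifications are defined on $[\sqrt{\epsilon},M]$) concludes the proof, and the advertised dependence $\kappa_0=\kappa_0(\epsilon,s_\theta,p,q,\tilde p,\tilde q,M)$ can be read off from how each threshold was chosen. There is no genuine obstacle here, since the lemma is essentially a quantitative continuity statement; the only point that requires a little care is the passage from $C^2$-convergence of $\vp_\kappa,\psi_\kappa$ to uniform convergence of the ratios $\nu_\kappa,\mu_\kappa,\theta_\kappa$, which is where the positivity of $\vp'$ and $\psi'$ on the compact interval enters.
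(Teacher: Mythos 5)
Your proof is correct and matches the paper's approach: the paper's own proof is the one-line remark that the claims follow directly from the uniform convergences in \eqref{eq:Convergence-of-regualrized-growth-rates}, and your argument simply spells out the quantitative thresholds implicit in that remark.
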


\begin{proof}
The proof follows directly from the convergences \eqref{eq:Convergence-of-regualrized-growth-rates}. 
\end{proof}
\subsection{Mollification of solution $\ue$} \label{sec:add-approx-delta}
For the regularization of $\ue$, let
$$ \zeta(y):=
\begin{cases}
c\exp\big(\frac{1}{|y|^2-1}\big) &\quad\text{if }|y|<1; \\
0 &\quad\text{if }|y|\geq 1,
\end{cases}
$$
where the constant $c>0$ is fixed such that $\int_{\Rn}\zeta(y)dy=1$. Denote $\zeta_\delta(y):=\frac{1}{\delta^n}\zeta(\frac{y}{\delta})$. This is the standard $n$-dimensional mollifier. Let $U'\Subset U$ be a smooth subdomain and $0<\delta<\dist(U',\partial U)$. The smooth mollification of $\ue$, 
$$ \ued\colon U'\to\R, $$
is given by
$$ \ued(x):=\int_{B(0,\delta)}\ue(x-y)\zeta_\delta(y)dy. $$
Since $\ue\in W^{2,2}_\loc(U)$, \cite{Challal2010}*{Theorem 2.1}, we have
\begin{equation} \label{eq:Convergence-of-ued}
    \ued\xrightarrow{\delta\to 0} \ue
    \quad\text{in }W^{2,2}(U').    
\end{equation}
That is, $ \ued\xrightarrow{\delta\to 0} \ue$ in $W^{2,2}_{\loc}(U)$.
Moreover, if $\delta=0$, then we interpret $u^{\epsilon,0}=\ue$.


\begin{lemma} \label{lem:Uniform-Gradient-Bound}
Let $\ue\in W^{1,\vp}(U)$ be a weak solution of \eqref{eq:Regularized-Dirichlet-Problem}, $U'\Subset U$ and $\ued\colon U'\to\R$ the smooth mollification of $\ue$.
If $0\leq \delta<\frac{1}{2}\dist(U',\partial U)$, then
$$ \|\nabla \ued\|_{L^{\infty}(U')}\leq 
C_1=C_1(n,p,q,\vp'(1),\|f\|_{L^\infty(U)},\|u\|_{L^\infty(U)},\dist(U',\partial U)). $$
In particular, the constant $C_1$ is independent of $\delta$ and $\epsilon.$
\end{lemma}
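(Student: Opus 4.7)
The plan is to combine two ingredients: the uniform $C^{1,\alpha}_{\loc}$ bound \eqref{eq:Uniform-bound-in-C1alpha} for the family $\{\ue\}_\epsilon$, which is independent of $\epsilon$, together with the elementary fact that convolution with a nonnegative mollifier of unit mass cannot increase the sup-norm.

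Concretely, I would first fix an intermediate subdomain $U'\Subset U''\Subset U$ chosen so that $\dist(U',\partial U'')\geq\tfrac12\dist(U',\partial U)$ and $\dist(U'',\partial U)\geq\tfrac14\dist(U',\partial U)$ (any such arithmetic split works, the key point is that both distances are controlled by $\dist(U',\partial U)$ only). Applying the Lieberman estimate \eqref{eq:Uniform-bound-in-C1alpha} on the pair $U''\Subset U$ yields
\begin{equation*}
    \|\nabla\ue\|_{L^\infty(U'')}\leq \|\ue\|_{C^{1,\alpha}(U'')}\leq C_1,
\end{equation*}
where $C_1$ depends only on $n,p,q,\vp'(1),\|f\|_{L^\infty(U)},\|u\|_{L^\infty(U)}$ and $\dist(U',\partial U)$, and crucially not on $\epsilon$.

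For the mollified function, I would use that, for $\delta<\tfrac12\dist(U',\partial U)\leq\dist(U',\partial U'')$ and $x\in U'$, the ball $B(x,\delta)\subset U''$, so that
\begin{equation*}
    \nabla\ued(x)=\int_{B(0,\delta)}\nabla\ue(x-y)\zeta_\delta(y)\,dy
\end{equation*}
is well-defined. Since $\zeta_\delta\geq 0$ and $\int\zeta_\delta=1$, Jensen's inequality (or a direct estimate by the triangle inequality) gives $|\nabla\ued(x)|\leq\|\nabla\ue\|_{L^\infty(U'')}\leq C_1$. The case $\delta=0$ is immediate since $\ue=u^{\epsilon,0}$ and $U'\Subset U''$ so the same estimate applies directly. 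This gives the claimed bound with the stated dependencies.

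No serious obstacle is expected; the only thing to be careful about is selecting the intermediate domain $U''$ so that the distance parameters feeding into \eqref{eq:Uniform-bound-in-C1alpha} depend only on $\dist(U',\partial U)$ (not on $\delta$), thereby ensuring independence of $\delta$ as well as of $\epsilon$.
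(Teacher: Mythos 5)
Your proposal is correct and follows essentially the same route as the paper: bound the mollified gradient by the sup of $\nabla\ue$ on a slightly enlarged set via the unit-mass, nonnegative mollifier, then invoke the uniform $C^{1,\alpha}$ estimate \eqref{eq:Uniform-bound-in-C1alpha}. The only cosmetic difference is that you fix a $\delta$-independent intermediate domain $U''$ up front, whereas the paper works directly with the $\delta$-neighborhood $B(U',\delta)$ and notes the resulting distance to $\partial U$ stays bounded below; both give the same constant dependencies.
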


\begin{proof}
We estimate as follows:
\begin{align*}
    \|\nabla \ued\|_{L^{\infty}(U')}
    &\leq
    \sup_{x\in U'}\Big|\int_{B(0,\delta)}\nabla\ue(x-y)\zeta_\delta(|y|)dy\Big| \\
    &\leq 
    \sup_{x\in U'}\int_{B(0,\delta)}|\nabla\ue(x-y)|\zeta_\delta(|y|)dy \\
    &\leq 
    \|\nabla \ue \|_{L^\infty(B(U',\delta))},
\end{align*}
where $B(U',\delta):=\{x\in\Rn:\dist(x,U')\leq \delta\}$.
Now the desired estimate follows from \eqref{eq:Uniform-bound-in-C1alpha}.
\end{proof}
\subsection{Application of main tool} \label{ssec:application}
In this section we explain how to employ Lemma \ref{lem:Inequality} in our setting. We use the notation introduced in Section \ref{ssec:inequality}, above the statement of Lemma \ref{lem:Inequality}.

For $0<\epsilon\leq 1$ and $0<\kappa<\tfrac{1}{2}\sqrt{\epsilon}$, let
\begin{equation} \label{eq:a-and-b}
    a(t):=\frac{\vp_\kappa(\sqrt{t^2+\epsilon})}{\sqrt{t^2+\epsilon}}
    \quad\text{and}\quad
    b(t):=\frac{\psi_\kappa(\sqrt{t^2+\epsilon})}{\sqrt{t^2+\epsilon}}.
\end{equation}
Notice that $a$ and $b$ are smooth functions on $[0,\infty)$. Computations show that
$$ \alpha(t)=\frac{a'(t)t}{a(t)} = \Big(\nuk(\sqrt{t^2+\epsilon})-1\Big)\frac{t^2}{t^2+\epsilon} $$
and
$$
\beta(t)=\frac{b'(t)t}{b(t)} = \Big(\muk(\sqrt{t^2+\epsilon})-1\Big)\frac{t^2}{t^2+\epsilon}
$$
and moreover,
$$ \gamma(t)=\frac{\alpha(t)+1}{\beta(t)+1}
=\frac{\nuk(\sqrt{t^2+\epsilon})t^2+\epsilon}{\muk(\sqrt{t^2+\epsilon})t^2+\epsilon}.
$$
By analyzing these functions we find that
\begin{equation} \label{eq:alpha-bounds}
    \min\Big\{\nuk(\sqrt{t^2+\epsilon})-1,0\Big\}
    \leq \alpha(t)
    \leq \max\Big\{\nuk(\sqrt{t^2+\epsilon})-1,0\Big\},
\end{equation}
and analogously
\begin{equation} \label{eq:beta-bounds}
    \min\Big\{\muk(\sqrt{t^2+\epsilon})-1,0\Big\}
    \leq \beta(t)
    \leq \max\Big\{\muk(\sqrt{t^2+\epsilon})-1,0\Big\},
\end{equation}
and finally
\begin{equation} \label{eq:gamma-bounds}
    \gamma(t)
    \leq \max\Big\{\theta_\kappa(\sqrt{t^2+\epsilon}),1\Big\}.
\end{equation}
Given such $a$ and $b$, we consider the $C^2$-vector fields
$$ V_a:=\Ve_{\vp_\kappa}(\nabla \ued) 
\quad\text{and}\quad
V_b:=\Ve_{\psi_\kappa}(\nabla \ued).
$$

\begin{lemma} \label{lem:main-lemma-smooth-case}
Let $\ue\in W^{1,\vp}(U)$ be a weak solution of \eqref{eq:Regularized-Dirichlet-Problem} and $\ued$ its smooth mollification. Fix $U'\Subset U$ and let $\eta\in C^\infty_0(U')$. Then the $(p,q)$ growth of $\vp$ and $(\tilde{p},\tilde{q})$ growth of $\psi$, together with the closeness assumption
$$ s_\theta<\frac{2(n-1)}{n-2}, $$
imply that if $0<\kappa<\kappa_0$ for some 
$$ \kappa_0=\kappa_0(\epsilon,s_\theta,n,p,q,\tilde{p},\tilde{q},\vp'(1),\|f\|_{L^\infty(U)},\|u\|_{L^\infty(U)},\dist(U',\partial U)), $$ 
then
\begin{equation}
\begin{aligned}
    &c\int_{U'}|D(\Ve_{\psi_\kappa}(\nabla  \ued))|^2\eta\, dx \\
    &\leq
    -\int_{U'}\langle D(\Ve_{\psi_\kappa}(\nabla  \ued))-(\tr(D (V_{\psi_\kappa}(\nabla  \ued)))I)(\Ve_{\psi_\kappa}(\nabla  \ued)-Z),\nabla  \eta\rangle \, dx \\
    &\quad
    +C\int_{U'}\Big(\frac{\psi_{\kappa}'(\sqrt{|\nabla  \ued|^2+\epsilon})}{\vp_{\kappa}'(\sqrt{|\nabla  \ued|^2+\epsilon})}\Big)^2\big(\diverg(\Ve_{\vp_\kappa}(\nabla  \ued))\big)^2\eta\, dx.
\end{aligned}
\end{equation}
Here $Z\in\Rn$ is an arbitrary vector, and $c=c(n,p,q,\tilde p,\tilde q,s_\theta)>0$ and $C=C(n,p,q,\tilde p,\tilde q,s_\theta)>0$ are positive constants that are independent of the regularization parameters $\epsilon$, $\kappa$ and $\delta$.
\end{lemma}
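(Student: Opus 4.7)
The plan is to apply the pointwise inequality of Lemma \ref{lem:Inequality} to the smooth vector fields $V_a := \Ve_{\vp_\kappa}(\nabla\ued)$ and $V_b := \Ve_{\psi_\kappa}(\nabla\ued)$ generated by the $C^\infty$ auxiliary functions $a, b$ of \eqref{eq:a-and-b}, then multiply by $\eta$, integrate over $U'$, and integrate the divergence term by parts.

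First I will check regularity. Since $\vp_\kappa, \psi_\kappa$ are $C^\infty$ on $[\tfrac{1}{2}\sqrt{\epsilon},\infty)$ and the argument $\sqrt{t^2+\epsilon}$ never drops below $\sqrt{\epsilon}$, the functions $a, b$ are $C^\infty$ on $[0,\infty)$. The mollification $\ued$ is itself smooth on $U'$, so $V_a, V_b$ are $C^\infty$ there, in particular meeting the $C^2$ hypothesis of Lemma \ref{lem:Inequality}.

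Next I will verify the quantitative hypotheses of Lemma \ref{lem:Inequality} with constants controlled independently of the regularization parameters. Lemma \ref{lem:Uniform-Gradient-Bound} supplies $\|\nabla\ued\|_{L^\infty(U')} \leq C_1$ with $C_1$ independent of $\epsilon, \delta, \kappa$; I take this as the constant $C_1$ in Lemma \ref{lem:Inequality}. Set $M := \sqrt{C_1^2+1}$, so that the argument $\sqrt{|\nabla\ued|^2+\epsilon}$ takes values in $[\sqrt{\epsilon}, M]$ throughout $U'$. I then choose $\kappa_0 \leq \tfrac{1}{2}\sqrt{\epsilon}$ small enough that Lemma \ref{lem:Growths-for-mollified-Orlicz-functions} applies on $[\sqrt{\epsilon}, M]$ for all $0<\kappa<\kappa_0$. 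Combining the resulting uniform bounds on $\nuk, \muk, \theta_\kappa$ with the pointwise estimates \eqref{eq:alpha-bounds}, \eqref{eq:beta-bounds}, \eqref{eq:gamma-bounds} yields
$$-1 < i_\alpha \leq s_\alpha < \infty, \qquad -1 < i_\beta \leq s_\beta < \infty, \qquad s_\gamma < \tfrac{2(n-1)}{n-2}$$
on $[0, C_1]$, with bounds depending only on $n, p, q, \tilde{p}, \tilde{q}, s_\theta$.

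Lemma \ref{lem:Inequality} therefore delivers, pointwise on $U'$, an inequality of the form \eqref{eq:Inequality} in which the ratio $b/a$ equals $\psi_\kappa'(\sqrt{|\nabla\ued|^2+\epsilon})/\vp_\kappa'(\sqrt{|\nabla\ued|^2+\epsilon})$ and the constants $c, C$ depend only on $n, p, q, \tilde{p}, \tilde{q}, s_\theta$. Multiplying by $\eta$, integrating over $U'$, and using $\int_{U'}\eta\,\diverg(F)\,dx = -\int_{U'}\langle F,\nabla\eta\rangle\,dx$ for $F := (DV_b - \tr(DV_b)I)(V_b - Z)$, with no boundary contribution thanks to $\spt\eta\Subset U'$, yields the claimed inequality. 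The main obstacle is purely bookkeeping: the bound from Lemma \ref{lem:Growths-for-mollified-Orlicz-functions} lives on $[\sqrt{\epsilon}, M]$ only, so $\kappa_0$ is forced to depend on $\epsilon$, which is acceptable because the outer limit $\kappa \to 0$ will be taken with $\epsilon$ still fixed. The crucial feature, which is what makes the lemma usable later, is that after this choice the constants $c, C$ in the final estimate are independent of $\epsilon, \kappa, \delta$, depending only on $n, p, q, \tilde{p}, \tilde{q}, s_\theta$.
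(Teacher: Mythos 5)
Your proof is correct and takes essentially the same route as the paper: verify that the mollified $a,b$ of \eqref{eq:a-and-b} and the mollified solution $\ued$ are smooth enough for Lemma \ref{lem:Inequality}, use Lemma \ref{lem:Uniform-Gradient-Bound} for a uniform gradient bound $C_1$ to fix $M$, invoke Lemma \ref{lem:Growths-for-mollified-Orlicz-functions} together with \eqref{eq:alpha-bounds}--\eqref{eq:gamma-bounds} to verify the hypotheses of Lemma \ref{lem:Inequality} with constants independent of $\epsilon,\kappa,\delta$, and then multiply by $\eta$ and integrate by parts. No gaps; you even correctly read $a,b$ as built from $\vp_\kappa',\psi_\kappa'$ (the paper's display \eqref{eq:a-and-b} has a typographical omission of the primes) and your $M=\sqrt{C_1^2+1}$ is the right choice.
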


\begin{proof}
Let $0<\kappa<\frac{1}{2}\sqrt{\epsilon}$ and $0<\delta<\frac{1}{2}\dist(U',\partial U)$. Then by Lemma \ref{lem:Uniform-Gradient-Bound}
$$ \|\nabla \ued\|_{L^{\infty}(U')}\leq 
C_1=C_1(n,p,q,\vp'(1),\|f\|_{L^\infty(U)},\|u\|_{L^\infty(U)},\dist(U',\partial U)). $$

The main point of the proof is to check that Lemma \ref{lem:Inequality} is applicable with $a$ and $b$ as defined in \eqref{eq:a-and-b}, 
and $u$ replaced with $\ued$. 

Firstly, by \eqref{eq:alpha-bounds} and Lemma \ref{lem:Growths-for-mollified-Orlicz-functions} (with $M=\sqrt{C_1+1}$)
\begin{equation} \label{eq:i-alpha-bound}
\begin{aligned}
    i_\alpha
    &=\inf\{\alpha(t):0\leq t\leq C_1\} \\
    &\geq\min\Big\{\inf\{\nuk(\sqrt{t^2+\epsilon})-1:0\leq t\leq C_1\},0\Big\} \\
    &\geq \min\{\tfrac{1}{2}(p-1)-1,0\}>-1,
\end{aligned}
\end{equation}
and
\begin{equation} \label{eq:s-alpha-bound}
\begin{aligned}
    s_\alpha
    &=\sup\{\alpha(t):0\leq t\leq c_1\} \\
    &\leq\max\Big\{\sup\{\nuk(\sqrt{t^2+\epsilon})-1:0\leq t\leq C_1\},0\Big\} \\
    &\leq \max\{q-1,0\}<\infty.
\end{aligned}
\end{equation}
Similarly, by using \eqref{eq:beta-bounds} and Lemma \ref{lem:Growths-for-mollified-Orlicz-functions}
\begin{equation} \label{eq:i-beta-s-beta-bounds}
    -1<i_\beta\leq s_\beta<\infty. 
\end{equation}
For the closeness condition \eqref{eq:Cordes-for-AwrtB} of Lemma \ref{lem:Inequality}, we use \eqref{eq:gamma-bounds} and Lemma \ref{lem:Growths-for-mollified-Orlicz-functions} to obtain
\begin{equation} \label{eq:s-gamma-bound}
\begin{aligned}
    s_\gamma
    &=\sup\{\gamma(t):0\leq t\leq C_1\} \\
    &\leq \max\Big\{\sup\{\theta_\kappa(\sqrt{t^2+\epsilon}):0\leq t\leq C_1\},1\Big\} \\
    &\leq \max\Big\{\tfrac{1}{2}(s_\theta+\tfrac{2(n-1)}{n-2}),1\Big\}<\frac{2(n-1)}{n-2}.
\end{aligned}
\end{equation}
We conclude that the bounds \eqref{eq:i-alpha-bound}--\eqref{eq:s-gamma-bound} imply that we can apply Lemma \ref{lem:Inequality} for the $C^2$-vector fields
$$ V_a=\Ve_{\vp_\kappa}(\nabla \ued) 
\quad\text{and}\quad
V_b=\Ve_{\psi_\kappa}(\nabla \ued) $$
and consequently
\begin{equation} \label{eq:pointwise-inequality-for-Vekappa-ued}
\begin{aligned}
    c|D(\Ve_{\psi_\kappa}(\nabla \ued)|^2
    &\leq
    \diverg((D(\Ve_{\psi_\kappa}(\nabla \ued))-\tr(D(\Ve_{\psi_\kappa}(\nabla \ued))I)(\Ve_{\psi_\kappa}(\nabla \ued)-Z)) \\
    &\quad
    +C\Big(\frac{\psi_{\kappa}'(\sqrt{|\nabla  \ued|^2+\epsilon})}{\vp_{\kappa}'(\sqrt{|\nabla  \ued|^2+\epsilon})}\Big)^2\big(\diverg(\Ve_{\vp_\kappa}(\nabla \ued))\big)^2
\end{aligned}    
\end{equation}
everywhere in $U'$. Here $Z\in\Rn$.

Let $\eta\in C^\infty_0(U')$. We multiply the above inequality \eqref{eq:pointwise-inequality-for-Vekappa-ued} with $\eta$ and integrate over $U'$ to obtain that
\begin{equation} \label{eq:integrated-inequality-for-Vekappa-ued}
\begin{aligned}
    &c\int_{U'}|D(\Ve_{\psi_\kappa}(\nabla \ued)|^2\eta dx \\
    &\leq
    \int_{U'}\diverg((D(\Ve_{\psi_\kappa}(\nabla \ued))-\tr(D(\Ve_{\psi_\kappa}(\nabla \ued))I)(\Ve_{\psi_\kappa}(\nabla \ued)-Z))\eta dx \\
    &\quad
    +C\int_{U'}\Big(\frac{\psi_{\kappa}'(\sqrt{|\nabla  \ued|^2+\epsilon})}{\vp_{\kappa}'(\sqrt{|\nabla  \ued|^2+\epsilon})}\Big)^2\big(\diverg(\Ve_{\vp_\kappa}(\nabla \ued))\big)^2\eta dx \\
    &=
    -\int_{U'}\la(D(\Ve_{\psi_\kappa}(\nabla \ued))-\tr(D(\Ve_{\psi_\kappa}(\nabla \ued))I)(\Ve_{\psi_\kappa}(\nabla \ued)-Z),\nabla\eta\ra dx \\
    &\quad
    +C\int_{U'}\Big(\frac{\psi_{\kappa}'(\sqrt{|\nabla  \ued|^2+\epsilon})}{\vp_{\kappa}'(\sqrt{|\nabla  \ued|^2+\epsilon})}\Big)^2\big(\diverg(\Ve_{\vp_\kappa}(\nabla \ued))\big)^2\eta dx.
\end{aligned}
\end{equation}
In the equality on the fourth row of the display \eqref{eq:integrated-inequality-for-Vekappa-ued} we applied integration by parts. The proof is finished.
\end{proof}
\subsection{Convergence of the additional approximations: Let $\delta\to0$ and $\kappa\to0$}

Let $\kappa_0$ denote the upper bound for $\kappa$, given by Lemma \ref{lem:main-lemma-smooth-case}.

\begin{lemma} \label{lem:Convergence}
Let $0<\epsilon\leq 1$ and $0\leq \kappa<\kappa_0$.
We have $\Ve_{\vp_\kappa}(\nabla\ue)\in W^{1,2}_{\loc}(U)$ and $\Ve_{\psi_\kappa}(\nabla\ue)\in W^{1,2}_{\loc}(U)$, and their derivatives are given by the formulas
$$
D(\Ve_{\vp_\kappa}(\nabla\ue))
=
\frac{\vp'_\kappa(\sqrt{|\nabla\ue|^2+\epsilon})}{\sqrt{|\nabla\ue|^2+\epsilon}} 
\Big(
I+\Big(\nu_\kappa\Big(\sqrt{|\nabla\ue|^2+\epsilon}\Big)-1\Big)\frac{\nabla\ue\otimes \nabla\ue}{|\nabla\ue|^2+\epsilon}
\Big)D^2\ue $$
and
$$
D(\Ve_{\psi_\kappa}(\nabla\ue))
=
\frac{\psi'_\kappa(\sqrt{|\nabla\ue|^2+\epsilon})}{\sqrt{|\nabla\ue|^2+\epsilon}} 
\Big(
I+\Big(\mu_\kappa\Big(\sqrt{|\nabla\ue|^2+\epsilon}\Big)-1\Big)\frac{\nabla\ue\otimes \nabla\ue}{|\nabla\ue|^2+\epsilon}
\Big)D^2\ue, $$
where $I$ denotes the identity matrix.
Moreover,
$$ \Ve_{\vp_\kappa}(\nabla\ued)
\xrightarrow{\delta\to0} \Ve_{\vp_\kappa}(\nabla\ue)
\quad\text{and}\quad 
\Ve_{\psi_\kappa}(\nabla\ued)
\xrightarrow{\delta\to0} \Ve_{\psi_\kappa}(\nabla\ue) 
\quad\text{in }W^{1,2}_{\loc}(U). $$
\end{lemma}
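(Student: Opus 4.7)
The plan is to view $V^\epsilon_{\vp_\kappa}(\nabla w) = F_\kappa(\nabla w)$ as a composition with a smooth auxiliary map. Define $F_\kappa \colon \Rn \to \Rn$ by
\[
F_\kappa(z) := \frac{\vp'_\kappa(\sqrt{|z|^2+\epsilon})}{\sqrt{|z|^2+\epsilon}}\, z,
\]
and $G_\kappa$ analogously with $\psi_\kappa$ in place of $\vp_\kappa$ (with the convention $\vp_0:=\vp$, $\psi_0:=\psi$). Since each $\vp_\kappa$ is of class $C^2$ on $[\tfrac{1}{2}\sqrt{\epsilon},\infty)$ and $\sqrt{|z|^2+\epsilon}\ge\sqrt{\epsilon}$ always stays in the interior of this range, $F_\kappa\in C^1(\Rn;\Rn)$. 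Writing $s:=\sqrt{|z|^2+\epsilon}$ and using the identity $\vp''_\kappa(s)=\nuk(s)\vp'_\kappa(s)/s$, a direct chain-rule computation gives
\[
DF_\kappa(z) = \frac{\vp'_\kappa(s)}{s}\Bigl(I+(\nuk(s)-1)\tfrac{z\otimes z}{s^2}\Bigr),
\]
and the analogous formula for $DG_\kappa$ with $\muk$ in place of $\nuk$. Substituting $z=\nabla\ue$ reproduces the stated formulas.

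Next I would apply a Sobolev chain rule. Given $F_\kappa\in C^1(\Rn;\Rn)$, $\ue\in W^{2,2}_{\loc}(U)$ (by \cite{Challal2010}*{Theorem 2.1}), and the local $L^\infty$ bound on $\nabla\ue$ from \eqref{eq:Uniform-bound-in-C1alpha}, the restriction of $F_\kappa$ to the ball $\{|z|\le \|\nabla\ue\|_{L^\infty_{\loc}}\}$ is Lipschitz. Hence $F_\kappa(\nabla\ue)\in W^{1,2}_{\loc}(U)$ with weak derivative $DF_\kappa(\nabla\ue)\,D^2\ue$, and likewise for $G_\kappa$. This establishes the first claim and the derivative formulas.

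For the $\delta\to 0$ convergence I would combine two inputs. First, because $\ue\in C^{1,\alpha}_{\loc}(U)$, the gradient $\nabla\ue$ is continuous and $\nabla\ued=(\nabla\ue)*\zeta_\delta$, so standard mollifier theory yields $\nabla\ued\to\nabla\ue$ locally uniformly. Second, $D^2\ued\to D^2\ue$ in $L^2_{\loc}(U)$ by \eqref{eq:Convergence-of-ued}. Lemma~\ref{lem:Uniform-Gradient-Bound} further supplies a uniform $L^\infty$-bound on $\nabla\ued$. The $L^2_{\loc}$-convergence $F_\kappa(\nabla\ued)\to F_\kappa(\nabla\ue)$ then follows from the local Lipschitz property of $F_\kappa$. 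For the weak derivatives, I would split
\[
DF_\kappa(\nabla\ued)\,D^2\ued - DF_\kappa(\nabla\ue)\,D^2\ue = \bigl[DF_\kappa(\nabla\ued)-DF_\kappa(\nabla\ue)\bigr]D^2\ued + DF_\kappa(\nabla\ue)\bigl[D^2\ued-D^2\ue\bigr].
\]
The second summand tends to $0$ in $L^2_{\loc}$ because $DF_\kappa(\nabla\ue)\in L^\infty_{\loc}$ multiplies an $L^2_{\loc}$-null factor. For the first summand, locally uniform convergence of $\nabla\ued$ together with continuity of $DF_\kappa$ yields $\|DF_\kappa(\nabla\ued)-DF_\kappa(\nabla\ue)\|_{L^\infty_{\loc}}\to 0$, while $\|D^2\ued\|_{L^2_{\loc}}$ stays uniformly bounded. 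The same argument applies verbatim to $G_\kappa$.

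The main (minor) technical point is securing the locally uniform convergence $\nabla\ued\to\nabla\ue$: mere $L^2_{\loc}$-convergence of $\nabla\ued$ would be insufficient for the first summand, since the Hessians $D^2\ued$ are only bounded in $L^2$ and not in any stronger norm. This uniform convergence is precisely what the a~priori $C^1$-regularity in \eqref{eq:Uniform-bound-in-C1alpha} provides. Beyond this, the argument is routine.
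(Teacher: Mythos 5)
Your proof is correct, and it takes a genuinely different route from the paper's. You invoke the Sobolev chain rule directly: since $F_\kappa\in C^1(\Rn;\Rn)$ is locally Lipschitz, and $\nabla\ue\in W^{1,2}_{\loc}\cap L^\infty_{\loc}$, the composition $F_\kappa(\nabla\ue)$ lies in $W^{1,2}_{\loc}$ with $D(F_\kappa(\nabla\ue))=DF_\kappa(\nabla\ue)\,D^2\ue$; the derivative formulas then follow from a chain-rule computation. The paper instead \emph{derives} weak differentiability of $\Ve_{\vp_\kappa}(\nabla\ue)$ from the $\delta\to0$ limit: it writes $\Ve_{\vp_\kappa}(\nabla\ued)=g_\delta\nabla\ued$ and its classical derivative $G_\delta D^2\ued$ for $\delta>0$, shows both converge in $L^2_{\loc}$ to $g\,\nabla\ue$ and $G\,D^2\ue$ respectively (using the uniform $L^\infty$ bound $\|g_\delta\|_{L^\infty}+\|G_\delta\|_{L^\infty}\leq C/\sqrt\epsilon$, $W^{2,2}_{\loc}$ convergence of $\ued$, and the dominated convergence theorem), and then passes to the limit in the integration-by-parts identity to identify the weak derivative. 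Your $\delta\to0$ convergence argument is structurally parallel to the paper's (both split a product difference into a uniformly-controlled factor times an $L^2$-null factor, plus a small coefficient times a bounded $L^2$ quantity), with the minor distinction that you appeal to locally uniform convergence of $\nabla\ued$ (available from the $C^{1,\alpha}_{\loc}$ estimate), whereas the paper uses dominated convergence for the coefficient term. Your approach is somewhat shorter because it front-loads the differentiability question into a known chain-rule theorem rather than obtaining it as a by-product of the limiting argument; the paper's approach is more self-contained, requiring no external Sobolev chain-rule machinery.
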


\begin{remark}
The above Lemma \ref{lem:Convergence} holds also in the non-mollified case $\kappa=0$.
\end{remark}

\begin{proof}[Proof of Lemma \ref{lem:Convergence}]
We only consider the case with $\vp_\kappa$ because the case with $\psi_\kappa$ is identical.

In order to ease the notation below, let us set for small  $\delta\geq 0$
$$ g_\delta:=\frac{\vp'_\kappa(\sqrt{|\nabla\ued|^2+\epsilon})}{\sqrt{|\nabla\ued|^2+\epsilon}} $$
and
\begin{align*}
    G_\delta
    :=\frac{\vp'_\kappa(\sqrt{|\nabla\ued|^2+\epsilon})}{\sqrt{|\nabla\ued|^2+\epsilon}} 
    \Big(
    I+\Big(\nu_\kappa\Big(\sqrt{|\nabla\ued|^2+\epsilon}\Big)-1\Big)\frac{\nabla\ued\otimes \nabla\ued}{|\nabla\ued|^2+\epsilon}
    \Big).
\end{align*}
Moreover, let $g:=g_0$ and $G:=G_0$. Note that $g_\delta$ is a scalar valued function and
\begin{align*}
    \Ve_{\vp_\kappa}(\nabla\ued)=g_\delta \nabla\ued
\end{align*}
for all $\delta\geq 0$, and $G_\delta$ is a matrix-valued function and
\begin{align*}
    D(\Ve_{\vp_\kappa}(\nabla\ued))
    =G_\delta D^2\ued
\end{align*}
for all $\delta>0$. In case $\delta=0$, we aim to show that  $\Ve_{\vp_\kappa}(\nabla\ue)$ is weakly differentiable and $D(\Ve_{\vp_\kappa}(\nabla\ue))=GD^2\ue$. 

We first observe the following bounds for $g_\delta$ and $G_\delta$. Namely, for any $U'\Subset U$ and for any $\delta\geq 0$
\begin{equation}
    \|g_\delta\|_{L^\infty(U')}
    \leq
    \frac{\big|\vp'_\kappa\Big(\sqrt{\|\nabla\ued\|_{L^\infty(U')}^2+\epsilon}\Big)\big|}{\sqrt{\epsilon}},
\end{equation}
and, by Lemma \ref{lem:Growths-for-mollified-Orlicz-functions},
\begin{equation}
    \|G_\delta\|_{L^\infty(U')}
    \leq
    \frac{C(n,p,q)\big|\vp'_\kappa\Big(\sqrt{\|\nabla\ued\|_{L^\infty(U')}^2+\epsilon}\Big)\big|}{\sqrt{\epsilon}}.
\end{equation}
By employing Lemma \ref{lem:Uniform-Gradient-Bound} we conclude that for any $\delta\geq0$
\begin{equation} \label{eq:Sup-bound-for-f-and-F}
    \|g_\delta\|_{L^\infty(U')}
    +
    \|G_\delta\|_{L^\infty(U')}
    \leq \frac{C}{\sqrt{\epsilon}}
\end{equation}
where
$$ C=C(n,p,q,\vp'(1),\vp'(M),\|f\|_{L^\infty(U)},\|u\|_{L^\infty(U)},\dist(U',\partial U)) $$ 
with some
$$ M=M(n,p,q,\vp'(1),\|f\|_{L^\infty(U)},\|u\|_{L^\infty(U)},\dist(U',\partial U)). $$ 
In particular $C$ is independent of $\delta$.

The bound \eqref{eq:Sup-bound-for-f-and-F} implies that $\Ve_{\vp_\kappa}(\nabla\ued)\in L^2_{\loc}(U)$ and $G_\delta \nabla\ued\in L^2_{\loc}(U)$ for any $\delta\geq 0$. Indeed, for any $U'\Subset U$ we can use the estimate \eqref{eq:Sup-bound-for-f-and-F} to find
\begin{align*}
    \int_{U'}|\Ve_{\vp_\kappa}(\nabla\ued)|^2dx
    =
    \int_{U'}|g_\delta \nabla\ued|^2dx 
    \leq
    \frac{C}{\epsilon}\int_{U'}|\nabla\ued|^2dx
\end{align*}
and
\begin{align*}
    \int_{U'}|G_\delta D^2\ued|^2dx
    &\leq
    \frac{C}{\epsilon}\int_{U'}|D^2\ued|^2dx.
\end{align*}

Now we check that $g_\delta \nabla\ued\xrightarrow{\delta\to0}g\nabla\ue$ and
$G_\delta D^2\ued\xrightarrow{\delta\to0}G D^2\ue$
in $L^2_{\loc}(U)$. This also follows from the bound \eqref{eq:Sup-bound-for-f-and-F}. Indeed, for any $U'\Subset U$
\begin{equation} \label{eq:Conv-1}
\begin{aligned}
    \int_{U'}|g_\delta \nabla\ued-g\nabla\ue|^2dx
    &\leq 4\Big\{
    \int_{U'}|g_\delta|^2|\nabla\ued-\nabla\ue|^2dx
    +\int_{U'}|g_\delta-g|^2|\nabla\ue|^2dx\Big\} \\
    &\leq 4\Big\{
    \frac{C}{\epsilon}\int_{U'}|\nabla\ued-\nabla\ue|^2dx
    +\int_{U'}|g_\delta-g|^2|\nabla\ue|^2dx\Big\} \\
    &\quad\xrightarrow{\delta\to0} 0.
\end{aligned}
\end{equation}
The convergence of the first integral follows trivially from the convergence $\ued\xrightarrow{\delta\to0}\ue$ in $W^{2,2}_{\loc}(U)$. For the convergence of the second integral, we additionally employ dominated convergence theorem.
Similarly,
\begin{equation} \label{eq:Conv-2}
\begin{aligned}
    \int_{U'}|G_\delta D^2\ued-GD^2\ue|^2dx
    &\leq 4\Big\{
    \int_{U'}|G_\delta|^2|D^2\ued-D^2\ue|^2dx
    +\int_{U'}|G_\delta-G|^2|D^2\ue|^2dx\Big\} \\
    &\leq 4\Big\{
    \frac{C}{\epsilon}\int_{U'}|D^2\ued-D^2\ue|^2dx
    +\int_{U'}|G_\delta-G|^2|D^2\ue|^2dx\Big\} \\
    &\quad\xrightarrow{\delta\to0} 0.
\end{aligned}    
\end{equation}
The two convergences \eqref{eq:Conv-1} and \eqref{eq:Conv-2} imply that we can let $\delta\to0$ in the integration by parts formula
$$ \int_U \Big(\Ve_{\vp_\kappa}(\nabla\ued)\Big)_i\frac{\partial\eta}{\partial x_j}dx
= -\int_U \frac{\partial}{\partial x_j}\Big(\Ve_{\vp_\kappa}(\nabla\ued)\Big)_i\eta dx
\quad\text{for all }\eta\in C^\infty_0(U) $$ 
for any $i,j=1,\ldots,n$ to conclude that $\Ve_{\vp_\kappa}(\nabla\ue)$ is weakly differentiable and $D(\Ve_{\vp_\kappa}(\nabla\ue))=GD^2\ue$. This finishes the proof.
\end{proof}

\begin{corollary} \label{cor:Pointwise-PDE}
In particular $\diverg\big(\Ve_{\vp}(\nabla\ue)\big)=f$ almost everywhere in $U$.
\end{corollary}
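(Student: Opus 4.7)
The plan is a short argument that combines the weak formulation of the regularized equation with the $W^{1,2}_{\loc}$-regularity supplied by Lemma~\ref{lem:Convergence} (applied in the non-mollified case $\kappa=0$, as noted in the remark following that lemma).

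\medskip

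First, I would recall that by the definition of a weak solution of \eqref{eq:Regularized-Dirichlet-Problem}, the identity
\[
\int_U \frac{\vp'(\sqrtue)}{\sqrtue}\la \nabla\ue,\nabla\eta\ra\, dx = \int_U f\eta\, dx
\]
holds for every $\eta\in C^\infty_0(U)$. Rewriting the left-hand side in terms of the vector field $\Ve_\vp(\nabla\ue)=\frac{\vp'(\sqrtue)}{\sqrtue}\nabla\ue$, this reads
\[
\int_U \la \Ve_\vp(\nabla\ue),\nabla\eta\ra\, dx = \int_U f\eta\, dx.
\]

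\medskip

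Next, I would invoke Lemma~\ref{lem:Convergence} with $\kappa=0$ to conclude that $\Ve_\vp(\nabla\ue)\in W^{1,2}_{\loc}(U;\Rn)$. This is the crucial point: it upgrades the distributional identity to one where integration by parts is legitimate against any compactly supported test function. Applying integration by parts to each component yields
\[
-\int_U \dv(\Ve_\vp(\nabla\ue))\,\eta\, dx = \int_U f\eta\, dx \quad\text{for all }\eta\in C^\infty_0(U).
\]
Both sides lie in $L^1_{\loc}(U)$ (the left by the $W^{1,2}_{\loc}$-regularity from Lemma~\ref{lem:Convergence}, the right by hypothesis on $f$), so the fundamental lemma of the calculus of variations gives the pointwise identity almost everywhere in $U$.

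\medskip

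There is essentially no obstacle here: the only substantive input is the weak differentiability of $\Ve_\vp(\nabla\ue)$, which has already been established in Lemma~\ref{lem:Convergence}, together with the explicit representation of $D(\Ve_\vp(\nabla\ue))$ given there that ensures the divergence is a genuine $L^2_{\loc}$ function rather than merely a distribution. The corollary is really a consequence of the fact that, thanks to the additional regularity we have now proved, the equation \eqref{eq:Regularized-Dirichlet-Problem} holds in the strong sense almost everywhere.
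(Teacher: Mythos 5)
Your argument is correct and follows essentially the same route as the paper: recall the weak formulation, use the $W^{1,2}_{\loc}$-regularity of $\Ve_\vp(\nabla\ue)$ supplied by Lemma~\ref{lem:Convergence} (in the $\kappa=0$ case) to justify integration by parts, and conclude by the fundamental lemma of the calculus of variations. One small remark: the integration by parts you wrote, with the minus sign on the left, is the correct one, so the pointwise identity it actually produces is $\dv\big(\Ve_\vp(\nabla\ue)\big)=-f$ a.e.\ (consistent with the minus sign in the PDE \eqref{eq:Regularized-Dirichlet-Problem}); the paper's statement and its rewriting $\int_U\big(\dv(\Ve_\vp(\nabla\ue))-f\big)\eta\,dx=0$ drop that sign, but the discrepancy is harmless because downstream this divergence only ever appears squared.
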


\begin{proof}
By the weak formulation of the PDE in \eqref{eq:Regularized-Dirichlet-Problem},
\begin{equation} \label{eq:weak-form-of-regularization}
    \int_U \la \Ve_{\vp}(\nabla \ue),\nabla \eta\ra dx = \int_U f\eta dx
\end{equation}
for all $\eta\in\CcU$. Since $\Ve_\vp(\nabla \ue) \in W^{1,2}_{\loc}(U)$ we can rewrite \eqref{eq:weak-form-of-regularization} as
$$ \int_U \left(\dv (\Ve_\vp(\nabla \ue))- f\right) \eta \, dx =0. $$
Fundamental theorem of calculus of variations yields that $\dv\big(\Ve_\vp(\nabla \ue)\big)=f$ almost everywhere in $U$.
\end{proof}

\begin{lemma} \label{lem:main-lemma-delta0-kappa0}
Let $\ue\in W^{1,\vp}(U)$ be a weak solution of \eqref{eq:Regularized-Dirichlet-Problem}. Fix $U'\Subset U$ and let $\eta\in C^\infty_0(U')$. Then the $(p,q)$ growth of $\vp$ and $(\tilde{p},\tilde{q})$ growth of $\psi$, together with the closeness assumption
$$ s_\theta<\frac{2(n-1)}{n-2}, $$
imply that 
\begin{equation}
\begin{aligned}
    &c\int_{U'}|D(\Ve_{\psi}(\nabla \ue))|^2\eta\, dx \\
    &\leq
    -\int_{U'}\langle (D(\Ve_{\psi}(\nabla \ue))-\tr(D(\Ve_{\psi}(\nabla \ue))I)(\Ve_{\psi}(\nabla \ue)-Z),\nabla \eta\rangle \, dx \\
    &\quad
    +C\int_{U'}\Big(\frac{\psi'(\sqrt{|\nabla \ue|^2+\epsilon})}{\vp'(\sqrt{|\nabla \ue|^2+\epsilon})}\Big)^2\big(\diverg(\Ve_{\vp}(\nabla \ue))\big)^2\eta\, dx.
\end{aligned}
\end{equation}
Here $Z\in\Rn$ is an arbitrary vector, and $c=c(n,p,q,\tilde p,\tilde q,s_\theta)>0$ and $C=C(n,p,q,\tilde p,\tilde q,s_\theta)>0$ are positive constants that are independent of $\epsilon$.
\end{lemma}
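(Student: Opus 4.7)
The plan is to start from Lemma \ref{lem:main-lemma-smooth-case}, which already gives the desired integral inequality with $\vp$, $\psi$ and $\ue$ replaced by their smooth approximations $\vp_\kappa$, $\psi_\kappa$ and $\ued$, and then pass to the limit first in $\delta\to 0$ (keeping $\kappa>0$ fixed) and afterwards in $\kappa\to 0$. Throughout, $\epsilon>0$ is fixed, so that $\sqrt{|\nabla\ued|^2+\epsilon}$ and $\sqrtue$ stay in a bounded closed subinterval $[\sqrt\epsilon,M]\subset(0,\infty)$, where $M$ comes from the uniform bound of Lemma \ref{lem:Uniform-Gradient-Bound}. This uniform positive lower bound is what makes all the prefactors involving $\vp'_\kappa$, $\psi'_\kappa$, $\nu_\kappa$, $\mu_\kappa$ behave well.

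First, fix $\kappa\in(0,\kappa_0)$ and let $\delta\to 0$. By Lemma \ref{lem:Convergence} we have
$$ \Ve_{\vp_\kappa}(\nabla\ued)\to \Ve_{\vp_\kappa}(\nabla\ue)\quad\text{and}\quad \Ve_{\psi_\kappa}(\nabla\ued)\to \Ve_{\psi_\kappa}(\nabla\ue) $$
strongly in $W^{1,2}_{\loc}(U)$. In particular, the derivatives $D(\Ve_{\psi_\kappa}(\nabla\ued))$ and the divergence $\diverg(\Ve_{\vp_\kappa}(\nabla\ued))$ converge strongly in $L^2_{\loc}(U)$ to their counterparts with $\delta=0$. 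The ratio $\psi_\kappa'(\sqrt{|\nabla\ued|^2+\epsilon})/\vp_\kappa'(\sqrt{|\nabla\ued|^2+\epsilon})$ converges uniformly on $\spt\eta$ because $\nabla\ued\to\nabla\ue$ uniformly on compact sets (the mollification of a $C^{1,\alpha}_{\loc}$ function converges in $C^1_{\loc}$) and $\vp_\kappa',\psi_\kappa'$ are continuous and positive on $[\sqrt\epsilon,M]$. Therefore all three integrals pass to the limit, giving the desired inequality with $\delta=0$ and $\kappa\in(0,\kappa_0)$.

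Next, let $\kappa\to 0$. The key input is the $C^2$-convergence \eqref{eq:Convergence-of-regularized-Orlicz-f} on $[\sqrt\epsilon,M]$, which yields uniform convergence on $\spt\eta$ of all the quantities $\vp_\kappa'(\sqrtue)$, $\psi_\kappa'(\sqrtue)$, $\mu_\kappa(\sqrtue)$, $\nu_\kappa(\sqrtue)$, $\psi_\kappa'(\sqrtue)/\sqrtue$, $\vp_\kappa'(\sqrtue)/\sqrtue$ to their $\kappa=0$ counterparts. Combining this with the explicit formulas for $D(\Ve_{\vp_\kappa}(\nabla\ue))$ and $D(\Ve_{\psi_\kappa}(\nabla\ue))$ from Lemma \ref{lem:Convergence} and the fact that $D^2\ue\in L^2_{\loc}(U)$, one obtains
$$ D(\Ve_{\psi_\kappa}(\nabla\ue))\to D(\Ve_\psi(\nabla\ue)) \quad\text{and}\quad \diverg(\Ve_{\vp_\kappa}(\nabla\ue))\to \diverg(\Ve_\vp(\nabla\ue)) $$
strongly in $L^2_{\loc}(U)$, and likewise uniform convergence of $\Ve_{\psi_\kappa}(\nabla\ue)$ and of the ratio $\psi_\kappa'(\sqrtue)/\vp_\kappa'(\sqrtue)$. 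All three integrals then pass to the limit, yielding the inequality with $\kappa=0$. Note that the constants $c,C$ produced by Lemma \ref{lem:main-lemma-smooth-case} depend only on $n,p,q,\tilde p,\tilde q,s_\theta$ (through the bounds of Lemma \ref{lem:Growths-for-mollified-Orlicz-functions}), and not on the approximation parameters, so they survive the limit.

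The main obstacle is not any single convergence but the fact that, with $\kappa=0$ and $\delta=0$, neither $\Ve_{\vp}(\nabla\ue)$ nor $\Ve_{\psi}(\nabla\ue)$ has a priori the $C^2$ regularity required to apply the pointwise inequality of Lemma \ref{lem:Inequality} directly. The double mollification in $(\kappa,\delta)$ restores smoothness; the work is in showing that the resulting Caccioppoli-type inequality is stable under the removal of both mollifications. The $\epsilon$-regularization is what guarantees a positive lower bound $\sqrtue\geq\sqrt\epsilon>0$, which keeps the nonlinear coefficients smooth and away from the degeneracy, and is thus what allows the $C^2$-convergence of the mollified Orlicz functions to translate into uniform convergence of the coefficients appearing in the integrals.
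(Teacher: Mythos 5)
Your proof is correct and follows essentially the same route as the paper: pass to the limit $\delta\to 0$ using Lemma \ref{lem:Convergence}, then $\kappa\to 0$ using the $C^2$-convergence \eqref{eq:Convergence-of-regularized-Orlicz-f}. Your observation that $\nabla\ued\to\nabla\ue$ in $C^0_{\loc}$ (since $\ue\in C^{1,\alpha}_{\loc}$) is a slight simplification over the paper's handling of the second integral, where the authors instead bound the ratio by a constant, split the difference into two terms, and pass to an a.e.-convergent subsequence of $\nabla\ued$ combined with dominated convergence.
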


\begin{proof}
Suppose that $0<\kappa<\kappa_0$ and $0<\delta<\frac{1}{2}\dist(U',U)$.
By Lemma \ref{lem:main-lemma-smooth-case} we have
\begin{equation}
\begin{aligned}
    &c\int_{U'}|D (\Ve_{\psi_\kappa}(\nabla \ued))|^2\eta\, dx \\
    &\leq
    -\int_{U'}\langle D(\Ve_{\psi_\kappa}(\nabla \ued))-(\tr(D( V_{\psi_\kappa}(\nabla \ued)))I)(\Ve_{\psi_\kappa}(\nabla \ued)-Z),\nabla \eta\rangle \, dx \\
    &\quad
    +C\int_{U'}\Big(\frac{\psi_{\kappa}'(\sqrt{|\nabla \ued|^2+\epsilon})}{\vp_{\kappa}'(\sqrt{|\nabla \ued|^2+\epsilon})}\Big)^2\big(\diverg(\Ve_{\vp_\kappa}(\nabla \ued))\big)^2\eta\, dx.
\end{aligned}
\end{equation}
We employ Lemma \ref{lem:Convergence} and dominated convergence theorem to let $\delta\to0$ in the above estimate. More precisely, for the convergence of the left hand side and the first integral on the right hand side, Lemma \ref{lem:Convergence} is sufficient. For the convergence of the second integral on the right hand side, we additionally note that 
$$ \Big(\frac{\psi_{\kappa}'(\sqrt{|\nabla \ued|^2+\epsilon})}{\vp_{\kappa}'(\sqrt{|\nabla \ued|^2+\epsilon})}\Big)^2
\leq \Big(\frac{\psi_{\kappa}'(\sqrt{C_1^2+1})}{\vp_{\kappa}'(\sqrt{\epsilon})}\Big)^2=:C_2 \quad\text{in }U' $$
for every $\delta\geq 0$. Here we employed Lemma \ref{lem:Uniform-Gradient-Bound}. Thus
\begin{align*}
    &\Big|
    \int_{U'}\Big(\frac{\psi_{\kappa}'(\sqrt{|\nabla \ued|^2+\epsilon})}{\vp_{\kappa}'(\sqrt{|\nabla \ued|^2+\epsilon})}\Big)^2\big(\diverg(\Ve_{\vp_\kappa}(\nabla \ued))\big)^2\eta\, dx 
    -
    \int_{U'}\Big(\frac{\psi_{\kappa}'(\sqrt{|\nabla \ue|^2+\epsilon})}{\vp_{\kappa}'(\sqrt{|\nabla \ue|^2+\epsilon})}\Big)^2\big(\diverg(\Ve_{\vp_\kappa}(\nabla \ue))\big)^2\eta\, dx\Big| \\
    &\leq
    C_2
    \int_{U'}\Big|\big(\diverg(\Ve_{\vp_\kappa}(\nabla \ued))\big)^2-\big(\diverg(\Ve_{\vp_\kappa}(\nabla \ue))\big)^2\Big||\eta|\,dx \\ 
    &\quad
    +
    \int_{U'}\Big|\Big(\frac{\psi_{\kappa}'(\sqrt{|\nabla \ued|^2+\epsilon})}{\vp_{\kappa}'(\sqrt{|\nabla \ued|^2+\epsilon})}\Big)^2
    -\Big(\frac{\psi_{\kappa}'(\sqrt{|\nabla \ue|^2+\epsilon})}{\vp_{\kappa}'(\sqrt{|\nabla \ue|^2+\epsilon})}\Big)^2\Big|\big(\diverg(\Ve_{\vp_\kappa}(\nabla \ue))\big)^2|\eta|\, dx.    
\end{align*}
The integral on the second row of the above display tends to zero as $\delta\to 0$ by Lemma \ref{lem:Convergence}. For the integral on the third row, we pass to a subsequence -- still denoted by itself -- such that $\nabla\ued\xrightarrow{\delta\to0}\nabla\ue$ almost everywhere in $U'$. Dominated convergence theorem allows us to conclude that the integral converges to zero as $\delta\to0$.

We obtain
\begin{equation}\label{eq:kappa-epsilon}
\begin{aligned}
    &c\int_{U'}|D (\Ve_{\psi_\kappa}(\nabla \ue))|^2\eta\, dx \\
    &\leq
    -\int_{U'}\langle D (\Ve_{\psi_\kappa}(\nabla \ue))-(\tr(D (V_{\psi_\kappa}(\nabla \ue)))I)(\Ve_{\psi_\kappa}(\nabla \ue)-Z),\nabla \eta\rangle \, dx \\
    &\quad
    +C\int_{U'}\Big(\frac{\psi_{\kappa}'(\sqrt{|\nabla \ue|^2+\epsilon})}{\vp_{\kappa}'(\sqrt{|\nabla \ue|^2+\epsilon})}\Big)^2\big(\diverg(\Ve_{\vp_\kappa}(\nabla \ue))\big)^2\eta\, dx.
\end{aligned}
\end{equation}
Finally, the derivative formulas of Lemma \ref{lem:Convergence} allow us to see that the integrands in \eqref{eq:kappa-epsilon} converge uniformly as $\kappa\to0$. Consequently we can let $\kappa\to0$ in \eqref{eq:kappa-epsilon} to arrive at the desired estimate.
\end{proof}

This Lemma, together with Corollary \ref{cor:Pointwise-PDE}, implies Proposition \ref{prop:Regularization-Main}.

\begin{proof}[Proof of Proposition \ref{prop:Regularization-Main}]
Let us fix a ball $B_{2r}\Subset U$ and select a nonnegative cutoff function $\eta\in C^\infty_0(B_{2r})$ such that
\begin{equation} \label{eq:cutoff}
    \eta\equiv 1 \enskip\text{in }B_r
    \quad\text{and}\quad
    |\nabla\eta| \leq \frac{C}{r}.
\end{equation}
By Lemma \ref{lem:main-lemma-delta0-kappa0} any Corollary \ref{cor:Pointwise-PDE} we obtain that
\begin{equation}
\begin{aligned}
    &c\int_{U'}|D(\Ve_{\psi}(\nabla \ue))|^2\eta^2\, dx \\
    &\leq
    -2\int_{U'}\langle D(\Ve_{\psi}(\nabla \ue))-(\tr(D(V_{\psi}(\nabla \ue)))I)(\Ve_{\psi}(\nabla \ue)-Z),\nabla \eta\rangle\eta \, dx \\
    &\quad
    +C\int_{U'}\Big(\frac{\psi'(\sqrt{|\nabla \ue|^2+\epsilon})}{\vp'(\sqrt{|\nabla \ue|^2+\epsilon})}f\Big)^2\eta^2\, dx
\end{aligned}
\end{equation}
for any vector $Z\in\Rn$. 
An application of Young's inequality yields that
\begin{equation}
\begin{aligned}
    \frac{c}{2}\int_{U'}|D(\Ve_{\psi}(\nabla \ue))|^2\eta^2\, dx 
    &\leq
    C\int_{U'}|\Ve_{\psi}(\nabla \ue)-Z|^2|\nabla \eta|^2\, dx \\
    &\quad
    +C\int_{U'}\Big(\frac{\psi'(\sqrt{|\nabla \ue|^2+\epsilon})}{\vp'(\sqrt{|\nabla \ue|^2+\epsilon})}f\Big)^2\eta^2\, dx.
\end{aligned}
\end{equation}
Select $Z=(\Ve_\psi(\nabla\ue))_{B_{2r}}$ and employ \eqref{eq:cutoff} to arrive at the desired estimate.
\end{proof}
\section{Proofs of main results}
We let $\epsilon\to$ in Proposition \ref{prop:Regularization-Main} to recover our main results.
\subsection{Let $\epsilon\to0$ in homogeneous case} \label{ssec:epsilon-to-0-H}

\begin{proof}[Proof of Theorem \ref{thm:main1}]
Let us fix the concentric balls $B_r\Subset B_{2r}\Subset \Omega$. Select a smooth subdomain $U\Subset \Omega$ such that $B_{2r}\Subset U$.
Consider the Dirichlet problem
\begin{equation}  \label{eq:regularized-Orlicz-Laplacian-eq-2}
\begin{cases}
\begin{aligned}
-\dv \Big(\frac{\varphi'\big(\sqrt{|\nabla \ue |^2+\varepsilon}\big)}{\sqrt{|\nabla \ue |^2+\varepsilon}}\nabla \ue\Big)=0 &\quad\text{in }U; \\
\ue=u &\quad\text{on }\partial U,
\end{aligned}
\end{cases}
\end{equation}
for some $\varepsilon>0$.
 
By Proposition \ref{prop:Regularization-Main} with $f=0$ we have that $V_\psi^\varepsilon(\nabla \ue) \in W^{1,2}_{\loc}(U)$ and
\begin{equation} \label{eq:Regularized-estimate}
    \int_{B_{r}} |D( \Ve_\psi(\nabla \ue ))|^2 \, dx \leq \frac{C}{r^2} \int_{B_{2r}} |\Ve_\psi(\nabla \ue ) - (\Ve_\psi(\nabla \ue ))_{B_{2r}}|^2\, dx
\end{equation}
where $C=C(n,p,q,\tilde{p},\tilde{q},s_\theta)>0$ is independent of $\epsilon$. Our goal is to let $\varepsilon\to 0$ in \eqref{eq:Regularized-estimate}. 

First, \eqref{eq:Uniform-bound-in-C1alpha} and estimate \eqref{eq:Regularized-estimate} imply that $\{\Ve_\psi(\nabla\ue)\}_\epsilon$ is bounded in $W^{1,2}(B_r)$. Indeed, from \eqref{eq:Uniform-bound-in-C1alpha} we conclude the pointwise estimate
\begin{equation} \label{eq:pointwise-estimate-proof-of-main1}
\begin{aligned}
        |\Ve_{\psi}(\nabla\ue)|^2 
        &=
        \Big|\frac{\psi'(\sqrtue)}{\sqrtue}\Big|^2|\nabla\ue|^2 \\
        &\leq
        \Big|\psi'\Big(\sqrtue\Big)\Big|^2 \\
        &\leq 
        \Big|\psi'\Big(\sqrt{\|\nabla\ue\|_{L^\infty(B_r)}+\epsilon}\Big)\Big|^2 \\
        &\leq
        \big|\psi'(C+1)\big|^2,
\end{aligned} 
\end{equation}
where 
$$ C=C(n,p,q,\vp'(1),\|u\|_{L^\infty(U)},\dist(B_r,\partial U)), $$
and hence, by \eqref{eq:Regularized-estimate},
\begin{align*}
        \int_{B_r}|D(\Ve_{\vp}(\nabla\ue))|^2dx
        &\leq
        \frac{C}{r^2}\int_{B_{2r}}|\Ve_\psi(\nabla\ue)-(\Ve_\psi(\nabla\ue))_{B_{2r}}|^2dx \leq Cr^{n-2}.
\end{align*}
Thus we see that $\|V_\psi^\varepsilon(\nabla \ue)\|_{W^{1,2}(B_r)} \leq C$.

By compactness arguments, there exists a function $V \in W^{1,2}(B_r)$ such that ${V_\psi^\varepsilon(\nabla \ue) \to V}$ weakly in $W^{1,2}(B_r)$ and strongly in $L^2(B_r)$ when $\varepsilon \to 0$. Especially there exists a subsequence converging pointwise almost everywhere. Since by the convergence \eqref{eq:Convergence-of-ue-in-C1} $\nabla \ue \to \nabla u$ in $C(B_r)$, we see that $V = V_\psi(\nabla u)$ almost everywhere in $B_r$. 

Combining pointwise convergence and the uniform boundedness \eqref{eq:pointwise-estimate-proof-of-main1} for Lebesgue's dominated convergence theorem we see that
\begin{align*}
\int_{B_{2r}} | V_\psi^\varepsilon(\ue)-( V_\psi^\varepsilon(\ue))_{B_{2r}}|^2 \, dx \xrightarrow{\epsilon\to0} \int_{B_{2r}} |V_\psi(\nabla u)-(V_\psi(\nabla u ))_{B_{2r}}|^2 \, dx.
\end{align*}
Finally using weak lower semicontinuity of the norm, we have
\begin{align*}
    \int_{B_{r}} |D( V_\psi(\nabla\ue))|^2 \, dx &\leq \liminf_{\varepsilon \to 0} \int_{B_{r}} |D( \Ve_\psi(\nabla \ue) )|^2 \, dx \\
    &\leq \liminf_{\varepsilon \to 0} \frac{C}{r^2} \int_{B_{2r}} |\Ve_\psi(\nabla \ue ) - (\Ve_\psi(\nabla \ue ))_{B_{2r}}|^2\, dx \\
    &\leq \dfrac{C}{r^2} \int_{B_{2r}} |V_\psi(\nabla u) - (V_\psi(\nabla u ))_{B_{2r}}|^2 \, dx.
\end{align*}
The proof is finished.
\end{proof}
\subsection{Let $\epsilon\to0$ in the non-homogeneous case} \label{ssec:epsilon-to-0-NH}

\begin{lemma} \label{lem:pointwise-bound-for-ratio}
Let $\vp$ and $\psi$ be $C^2$ regular Orlicz functions with $(p,q)$ growth and $(\tilde{p},\tilde{q})$ growth, respectively.
Suppose that the ratio function $\rho$ of $\vp$ and $\psi$, as defined in \eqref{eq:Intro-ratio-def}, is bounded as follows,
$$ s_\rho=\sup\{\rho(t):0\leq t\leq1\}<\infty. $$
Then
$$ \frac{\psi'(t)}{\vp'(t)} \leq C(1+\psi'(t)) \quad\text{for all }t\geq0. $$
Here $C=\max\{s_\rho,(\vp'(1))^{-1}\}$.
\end{lemma}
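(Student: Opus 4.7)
The plan is to split the real half-line at $t=1$ and use a different estimate on each piece. The point of the splitting is that the assumption $s_\rho<\infty$ directly controls $\psi'/\vp'$ on $[0,1]$, whereas for $t>1$ we no longer have uniform control of $\psi'/\vp'$ but instead we have uniform control of $1/\vp'$ from below using monotonicity.

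First I would handle the case $t\in[0,1]$. By the definition of $\rho$ in \eqref{eq:Intro-ratio-def} we have
$$ \frac{\psi'(t)}{\vp'(t)}=\rho(t)\leq s_\rho\leq C, $$
and since $1+\psi'(t)\geq 1$, this already gives the claimed bound on $[0,1]$.

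Next I would handle $t>1$. The $(p,q)$-growth condition \eqref{eq:growth-rate} gives $\vp''(t)>0$ for $t>0$, so $\vp'$ is strictly increasing on $(0,\infty)$. Together with $\vp'\in C^1([0,\infty))$ this yields $\vp'(t)\geq \vp'(1)>0$ for every $t\geq 1$. Consequently,
$$ \frac{\psi'(t)}{\vp'(t)}\leq \frac{\psi'(t)}{\vp'(1)}\leq C\,\psi'(t)\leq C\bigl(1+\psi'(t)\bigr), $$
where in the second inequality I used $(\vp'(1))^{-1}\leq C$ by the choice of $C$. Combining the two cases finishes the proof. There is no real obstacle here; the only small point to check is the monotonicity of $\vp'$, which is immediate from the lower bound $\nu(t)\geq p-1>0$ in \eqref{eq:growth-rate}.
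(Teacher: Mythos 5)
Your proof is correct and follows essentially the same route as the paper: both split at $t=1$, use $s_\rho$ to control the ratio on $[0,1]$, and use the monotonicity of $\vp'$ (hence $\vp'(t)\geq\vp'(1)$) to bound the ratio by $\psi'(t)/\vp'(1)$ for $t>1$. The paper simply writes the case split with indicator functions and adds the two bounds rather than treating them in two separate paragraphs; the substance is identical.
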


\begin{proof} 
For any $t\geq0$,
\begin{align*}
    \frac{\psi'(t)}{\vp'(t)}
    &=
    \chi_{\{0\leq t\leq 1\}}(t)\frac{\psi'(t)}{\vp'(t)}+\chi_{\{t>1\}}(t)\frac{\psi'(t)}{\vp'(t)} \\
    &\leq
    \sup\Big\{\frac{\psi'(t)}{\vp'(t)}:0\leq t\leq1\Big\}+\frac{\psi'(t)}{\vp'(1)} \\
    &\leq C(1+\psi'(t)).
\end{align*}
Above $\chi$ denotes the indicator function.
\end{proof}

\begin{proof}[Proof of Theorem \ref{thm:main2-new}]
Let us fix the concentric balls $B_r\Subset B_{2r}\Subset \Omega$. Select a smooth subdomain $U\Subset \Omega$ such that $B_{2r}\Subset U$.
Consider the Dirichlet problem
\begin{equation}  \label{eq:regularized-Orlicz-Laplacian-eq-3}
\begin{cases}
\begin{aligned}
-\dv \Big(\frac{\varphi'\big(\sqrt{|\nabla \ue |^2+\varepsilon}\big)}{\sqrt{|\nabla \ue |^2+\varepsilon}}\nabla \ue\Big)=f &\quad\text{in }U; \\
\ue=u &\quad\text{on }\partial U,
\end{aligned}
\end{cases}
\end{equation}
for some $\varepsilon>0$.
 
By Proposition \ref{prop:Regularization-Main} we have that $V_\psi^\varepsilon(\nabla \ue) \in W^{1,2}_{\loc}(U;\Rn)$ and
\begin{equation} \label{eq:Regularized-estimate-with-f-original}
\begin{aligned}
    \int_{B_{r}} |D( \Ve_\psi(\nabla \ue ))|^2 \, dx &\leq \frac{C}{r^2} \int_{B_{2r}} |\Ve_\psi(\nabla \ue ) - (\Ve_\psi(\nabla \ue ))_{B_{2r}}|^2\, dx
    +C\int_{B_{2r}}\Big(\frac{\psi'(\sqrtue)}{\vp'(\sqrtue)}f\Big)^2dx
\end{aligned}    
\end{equation}
where $C=C(n,p,q,\tilde{p},\tilde{q},s_\theta)>0$ is independent of $\epsilon$. Our goal is to let $\varepsilon\to 0$ in \eqref{eq:Regularized-estimate-with-f-original}. We proceed similarly as in the proof of Theorem \ref{thm:main1}.

Let us first check that $\{\Ve_\psi(\nabla\ue)\}_\epsilon$ is bounded in $W^{1,2}(B_r)$. As in the proof of Theorem \ref{thm:main1}, \eqref{eq:Uniform-bound-in-C1alpha} yields the pointwise estimate
\begin{equation} \label{eq:pointwise-estimate-in-proof-of-main2-new}
\begin{aligned}
        |\Ve_{\psi}(\nabla\ue)|^2 
        &\leq
        \big|\psi'(\sqrt{C+1})\big|^2,
\end{aligned} 
\end{equation}
where 
$$ C=C(n,p,q,\vp'(1),\|f\|_{L^\infty(U)},\|u\|_{L^\infty(U)},\dist(B_{2r},\partial U)). $$
On the other hand, by Lemma \ref{lem:pointwise-bound-for-ratio},
\begin{align*}
    \frac{\psi'(\sqrtue)}{\vp'(\sqrtue)}
    &\leq
    C\Big(1+\psi'\big(\sqrtue\big)\Big)
    \leq 
    C\Big(1+\psi'\big(\sqrt{C+1}\big)\Big)
\end{align*}
where $C=\max\{s_\rho,(\vp'(1))^{-1}\}$.
Hence,
\begin{align*}
    \int_{B_{r}} |D (\Ve_\psi(\nabla \ue))|^2 \, dx 
    &\leq Cr^{n-2}
\end{align*}
for some constant $C$ that is independent of $\epsilon$. Thus we see that $\|V_\psi^\varepsilon(\nabla \ue)\|_{W^{1,2}(B_r)} \leq C$.

By compactness arguments, there exists a function $V \in W^{1,2}(B_r)$ such that ${V_\psi^\varepsilon(\nabla \ue) \to V}$ weakly in $W^{1,2}(B_r)$ and strongly in $L^2(B_r)$ when $\varepsilon \to 0$. Especially there exists a subsequence converging pointwise almost everywhere. Since by the convergence \eqref{eq:Convergence-of-ue-in-C1} $\nabla \ue \to \nabla u$ in $C(B_r)$, we see that $V = V_\psi(\nabla u)$ almost everywhere in $B_r$. 

By the weak lower semicontinuity of norm we get from \eqref{eq:Regularized-estimate-with-f-original}
\begin{align*}
        \int_{B_r}|D(V_{\psi}(\nabla u))|^2dx
        &\leq 
        \liminf_{\epsilon\to0}
        \int_{B_r}|D(\Ve_{\psi}(\nabla\ue))|^2dx \\
        &\leq
        \liminf_{\epsilon\to0}
        \Big(\frac{C}{r^2}\int_{B_{2r}}|\Ve_\psi(\nabla\ue)-(\Ve_\psi(\nabla\ue))_{B_{2r}}|^2dx \\
        &\quad+C\int_{B_{2r}}\Big(\frac{\psi'(\sqrtue)}{\vp'(\sqrtue)}f\Big)^2dx \Big)
\end{align*}
By the same argument as in proof of Theorem \ref{thm:main1}
\begin{align*}
        \liminf_{\epsilon\to0}
        \int_{B_{2r}}|\Ve_\psi(\nabla\ue)-(\Ve_\psi(\nabla\ue))_{B_{2r}}|^2dx 
        =
        \int_{B_{2r}}|V_\psi(\nabla u)-(V_\psi(\nabla u))_{B_{2r}}|^2dx.
\end{align*}    
By the continuity of $\vp'$ and $\psi'$, together with the convergence  $\ue \to u$ in $C^{1}_{\loc}(U)$, see \eqref{eq:Convergence-of-ue-in-C1}, we see that
$$ \frac{\psi'(\sqrtue)}{\vp'(\sqrtue)}\xrightarrow{\epsilon\to0}
\frac{\psi'(|\nabla u|)}{\vp'(|\nabla u|)} $$
everywhere in $U$. Combining the pointwise convergence and the uniform boundedness \eqref{eq:pointwise-estimate-in-proof-of-main2-new} for Lebesgue's dominated convergence theorem we see that
\begin{align*}
        \lim_{\epsilon\to0}
        \int_{B_{2r}}\Big(\frac{\psi'(\sqrtue)}{\vp'(\sqrtue)}f\Big)^2 dx
        =
        \int_{B_{2r}}\Big(\frac{\psi'(|\nabla u|)}{\vp'(|\nabla u|)}f\Big)^2 dx
\end{align*}   
Hence, we conclude the desired inequality.
\end{proof}
\subsection{Higher integrability}

An essential tool for obtaining higher integrability is the following lemma, due to Gehring \cite{Gehring1973}. 

\begin{lemma}[Gehring's lemma, \cites{Gehring1973,Giusti2003}]
Let $g \in L^1(B_{2R})$ be non-negative. Assume that there exists $s\in(0, 1)$ such that for every ball $B_R$, with $B_r \subset B_{2r} \Subset B_{2R}$, holds
\[
\fint_{B_r} g\, dx 
\leq \gamma\bigg(\Big(\fint_{B_{2r}} g^s\, dx\Big)^{\frac{1}{s}} + \fint_{B_{2r}} h \, dx \bigg)
\]
with $0<s<1$. Assume that  the function $h$ belongs to $L^t(B_{2R})$ for some $t>1$. Then there exists $\ell=\ell(n,s,\gamma,t)>1$ such that
\[
\bigg(\fint_{B_R} g^{\ell} \, dx \bigg)^{1/\ell} \leq C\bigg( \fint_{B_{2R}} g \, dx + \Big(\fint_{B_{2R}} h^\ell \, dx\Big)^{1/\ell}\bigg)
\]
where $C=C(n,s,\gamma,t)>0$.
\end{lemma}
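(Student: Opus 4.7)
The plan is to prove Gehring's lemma by the classical Calderón--Zygmund stopping-time argument at the level of super-level sets. First I would fix a threshold
$$ t_0 := A\fint_{B_{2R}}(g+h)\,dx $$
for a large constant $A=A(n,\gamma)$ to be chosen later, and consider an arbitrary level $t>t_0$. For each Lebesgue point $x\in B_R$ with $g(x)>t$, continuity of the averages $r\mapsto \fint_{B_r(x)}g\,dy$ as $r$ varies (combined with $g(x)>t>t_0$ and the fact that large averages are bounded by $t_0$) produces a maximal radius $r_x$ such that $\fint_{B_{r_x}(x)}g\,dy=t$ while $\fint_{B_{2r_x}(x)}g\,dy\le t$. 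A Vitali-type covering selects a countable pairwise disjoint subfamily $\{B_{r_i}(x_i)\}$ whose five-fold enlargements cover the super-level set $\{g>t\}\cap B_R$.

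The next step is to apply the reverse Hölder hypothesis on each selected ball $B_{r_i}(x_i)$ at the scale pair $r_i,2r_i$. Since $\fint_{B_{r_i}}g\,dy=t$, the hypothesis gives
$$ t\le \gamma\Bigl(\bigl(\fint_{B_{2r_i}}g^s\,dy\bigr)^{1/s}+\fint_{B_{2r_i}}h\,dy\Bigr),$$
so splitting $g=g\chi_{\{g\le \eta t\}}+g\chi_{\{g>\eta t\}}$ on $B_{2r_i}$ for a small $\eta=\eta(n,\gamma)$ and handling the $h$ piece analogously yields, after summing over $i$ and using disjointness plus $\sum_i|B_{r_i}|\le t^{-1}\int_{\{g>\eta t\}\cap B_{2R}}g\,dy$, the distributional inequality
$$ \int_{\{g>t\}\cap B_R}g\,dy \le C\int_{\{g>\eta t\}\cap B_{2R}}g\,dy + C\int_{\{h>\eta t\}\cap B_{2R}}h\,dy, $$
with $C=C(n,\gamma,s)$ independent of $t\ge t_0$.

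Finally I would multiply this inequality by $t^{\ell-2}$ and integrate in $t$ from $t_0$ to $\infty$, using Fubini (the layer-cake identity $\int_0^\infty t^{\ell-2}\int_{\{g>t\}}g\,dy\,dt=(\ell-1)^{-1}\int g^\ell\,dy$ up to lower-order boundary terms) to convert the distributional bound into
$$ \int_{B_R}g^\ell\,dy \le \frac{C\eta^{1-\ell}}{\ell-1}\int_{B_{2R}}g^\ell\,dy + \frac{C}{\ell-1}\int_{B_{2R}}h^\ell\,dy + t_0^{\ell-1}\int_{B_{2R}}g\,dy. $$
The key obstacle, and the reason $\ell$ must be chosen carefully in terms of $n,s,\gamma,t$, is the absorption step: the factor $C\eta^{1-\ell}/(\ell-1)$ in front of $\int g^\ell$ must be made strictly less than $1$, which forces $\ell$ sufficiently close to $1$ so that $\eta^{1-\ell}$ stays bounded while the covering constant $C$ is cancelled by a suitable dependence. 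One should work on a nested family of balls (running the argument between $B_R$ and $B_{\rho R}$ for $\rho\in(1,2)$) and apply an iteration lemma of Giaquinta--Giusti type to absorb rigorously. Dividing by $|B_R|$, combining with the trivial contribution from $\{g\le t_0\}\cap B_R$, and substituting the definition of $t_0$ gives the averaged conclusion in the stated form.
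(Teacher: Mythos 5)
This lemma is quoted in the paper without proof, with a citation to Gehring and to Giusti's book, so there is no in-paper proof to compare against. Your sketch follows the classical Calder\'on--Zygmund stopping-time route found in those references, so the overall \emph{plan} is appropriate. However, there is a genuine gap at the pivotal step, and it defeats the absorption argument.

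The distributional inequality you write down,
\[
\int_{\{g>t\}\cap B_R}g\,dy \le C\int_{\{g>\eta t\}\cap B_{2R}}g\,dy + C\int_{\{h>\eta t\}\cap B_{2R}}h\,dy,
\]
is in fact \emph{trivially} true with $C=1$, because $\{g>t\}\subset\{g>\eta t\}$ and $B_R\subset B_{2R}$. Tracing your derivation, the bound $\sum_i|B_{r_i}|\le (1-\eta)^{-1}t^{-1}\int_{\{g>\eta t\}\cap B_{2R}}g$ follows from disjointness and $\fint_{B_{r_i}}g=t$ alone, and combined with $\int_{\{g>t\}\cap B_R}g\le 5^n t\sum_i|B_{r_i}|$ it produces the displayed inequality without ever invoking the reverse H\"older hypothesis. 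In particular $s<1$ plays no role, which should be a red flag. And indeed the absorption cannot work from there: after multiplying by $t^{\ell-2}$ and applying the layer-cake identity, the factor $1/(\ell-1)$ appears identically on both sides and cancels, leaving $\int_{B_R}g^\ell\le C\eta^{1-\ell}\int_{B_{2R}}g^\ell+\cdots$. Since $\eta<1$, $\eta^{1-\ell}\ge 1$ for every $\ell>1$, so the prefactor is always $\ge C>1$ (recall $C\ge 5^n/(1-\eta)$ from the covering); no choice of $\ell$ makes it small, and the Giaquinta--Giusti iteration cannot help because its hypothesis requires a coefficient strictly less than $1$.

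The correct distributional inequality must retain the exponent $s$. On each selected ball you should actually deduce from the reverse H\"older on $B_{2r_i}$, after the small/large split at level $\eta t$ applied to $g^s$ and to $h$, that
\[
|B_{r_i}| \le C\Big(t^{-s}\int_{B_{2r_i}\cap\{g>\eta t\}}g^s\,dy + t^{-1}\int_{B_{2r_i}\cap\{h>\eta t\}}h\,dy\Big),
\]
which upon summation (and after handling the overlap of the balls $B_{2r_i}$, which is an additional technical point that a dyadic Calder\'on--Zygmund decomposition handles more cleanly than a Vitali selection of balls) gives
\[
\int_{\{g>t\}\cap B_R}g\,dy \le C\, t^{1-s}\int_{\{g>\eta t\}\cap B_{2R}}g^s\,dy + C\int_{\{h>\eta t\}\cap B_{2R}}h\,dy.
\]
Now the integration in $t$ with weight $t^{\ell-2}$ turns the first term into $\dfrac{C\eta^{s-\ell}}{\ell-s}\int_{B_{2R}}g^\ell$, while the left-hand side still carries a factor $\dfrac{1}{\ell-1}$. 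The resulting prefactor $\dfrac{C(\ell-1)\eta^{s-\ell}}{\ell-s}$ tends to $0$ as $\ell\to 1^+$ (because $\ell-1\to 0$ while $\eta^{s-\ell}\to\eta^{s-1}$ and $\ell-s\to 1-s>0$ stay bounded), so a choice of $\ell$ close to $1$ permits absorption. This is precisely where the gap $s<1$ buys higher integrability, and it is lost in your version of the distributional inequality.

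Two minor remarks. First, the bounded-overlap issue for $\{B_{2r_i}\}$ mentioned above should be addressed (e.g.\ by working with dyadic cubes or by a Besicovitch-type covering), since Vitali disjointness of $\{B_{r_i}\}$ does not imply bounded overlap of the dilates. Second, your reason for the constraint on $\ell$ (``so that $\eta^{1-\ell}$ stays bounded while the covering constant $C$ is cancelled'') is not a mechanism that can succeed; the correct mechanism is the factor $\ell-1$ in the numerator coming from the layer-cake identity, available only once $g^s$ is on the right.
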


\begin{proof}[Proof of Corollary \ref{thm_higher-integrability-1}]
Combining Theorem~\ref{thm:main1} and Sobolev--Poincar'e inequality we have
\begin{align*}
    \left( \fint_{B_r} |D(V_\psi (\nabla u))|^2 \, dx \right)^{1/2} &\leq \dfrac{C}{r} \left( \fint_{B_{2r}} |V_\psi(\nabla u) - (V_\psi(\nabla u))_{B_{2r}}|^2 \, dx \right)^{1/2} \\
    &\leq C \left(\fint_{B_{2r}} |D(V_\psi(\nabla u))|^{\frac{2n}{n+2}} \, dx \right)^{\frac{n+2}{2n}}.
\end{align*}
We apply Gehring's lemma with
$$ g=|D(V_\psi(\nabla u))|^2 \quad\text{and}\quad h=0 $$   
to obtain the desired result.
\end{proof}

\begin{proof}[Proof of Corollary \ref{thm_higher-integrability-2-new}]
Combining Theorem~\ref{thm:main2-new} and Sobolev--Poincar\'e inequality we have
\begin{align*}
    \left( \fint_{B_r} |D(V_\psi (\nabla u))|^2 \, dx \right)^{1/2} &\leq \dfrac{C}{r} \left( \fint_{B_{2r}} |V_\psi(\nabla u) - (V_\psi(\nabla u))_{B_{2r}}|^2 \, dx \right)^{1/2} + C \left(\fint_{B_{2r}} \Big(\frac{\psi'(|\nabla u|)}{\vp'(|\nabla u|)}f\Big)^2 \, dx \right)^{1/2} \\
    &\leq C \left(\fint_{B_{2r}} |D(V_\psi(\nabla u))|^{\frac{2n}{n+2}} \, dx \right)^{\frac{n+2}{2n}} +  C \left(\fint_{B_{2r}} \Big(\frac{\psi'(|\nabla u|)}{\vp'(|\nabla u|)}f\Big)^2 dx \right)^{1/2}.
\end{align*}
We apply Gehring's lemma with
$$ g=|D(V_\psi(\nabla u))|^2 \quad\text{and}\quad h=\Big(\frac{\psi'(|\nabla u|)}{\vp'(|\nabla u|)}f\Big)^2  $$  to obtain the desired result.
Note that $h\in L^\infty_{\loc}(\Om)$ due to the assumptions of Theorem \ref{thm:main2-new}, and by Lemma~\ref{lem:pointwise-bound-for-ratio} and \eqref{eq:u-is-C1}. 
\end{proof}
\section*{Acknowledgements}

Part of this work was done when S. Sarsa was visiting University of Warsaw. She wants to thank the department there for their kind hospitality during her stay.

The authors wish to thank Iwona Chlebicka and Petteri Harjulehto for reading the manuscript and giving feedback.
\bibliographystyle{amsplain}
\bibliography{bibliography}
\end{document}